\documentclass[english]{amsart}
\usepackage{ucs}
\usepackage[utf8x]{inputenc}
\usepackage{babel}
\usepackage{amsmath,amsthm}
\usepackage{amssymb}
\usepackage[all]{xy}
\usepackage{verbatim}

\pagestyle{plain}

\newcommand{\R}{\mathbb{R}}
\newcommand{\C}{\mathbb{C}}
\newcommand{\Q}{\mathbb{Q}}
\newcommand{\Z}{\mathbb{Z}}
\newcommand{\N}{\mathbb{N}}

\newcommand{\ra}{\rightarrow}

\newtheorem{thm}{Theorem}
\newtheorem{df}[thm]{Definition}
\newtheorem{prop}[thm]{Proposition}
\newtheorem{lem}[thm]{Lemma}
\newtheorem{cor}[thm]{Corollary}

\title{The Tate conjecture for $K3$ surfaces over finite fields}

\author{Fran\c{c}ois Charles}
\email{francois.charles@univ-rennes1.fr}
\address{Universit\'e de Rennes 1, IRMAR--UMR 6625 du CNRS, Campus de Beaulieu, 35042 Rennes
Cedex, France}

\begin{document}

\begin{abstract}
Artin's conjecture states that supersingular $K3$ surfaces over finite fields have Picard number $22$. In this paper, we prove Artin's conjecture over fields of characteristic $p\geq 5$. This implies Tate's conjecture for $K3$ surfaces over finite fields of characteristic $p\geq 5$. Our results also yield the Tate conjecture for divisors on certain holomorphic symplectic varieties over finite fields, with some restrictions on the characteristic. As a consequence, we prove the Tate conjecture for cycles of codimension $2$ on cubic fourfolds over finite fields of characteristic $p\geq 5$. 
\end{abstract}

\maketitle

\section{Introduction}

The goal of this paper is to study the Tate conjecture for varieties with $h^{2,0}=1$ over finite fields. The main result is the following. Recall that Artin conjectured in \cite{Ar74} that the rank of the N\'eron-Severi group of a supersingular $K3$ surface over a finite field has the maximal possible value, that is, $22$.

\begin{thm}\label{main}
Artin's conjecture holds for supersingular $K3$ surfaces over algebraically closed fields of characteristic $p\geq 5$.
\end{thm}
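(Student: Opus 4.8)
The plan is to deduce Artin's conjecture from the Tate conjecture for abelian varieties over finite fields --- known unconditionally by Tate, Zarhin and Faltings --- via the Kuga--Satake construction, using the geometry of the moduli space of polarized $K3$ surfaces to make the argument uniform in the characteristic. Let $X$ be a supersingular $K3$ surface over $k=\overline{\mathbb{F}}_p$; it is defined over a finite subfield $\mathbb{F}_q$. Because $X$ is supersingular, every eigenvalue of the $q$-power Frobenius on $H^2(X,\mathbb{Q}_\ell)$ is $q$ times a root of unity, so after a finite extension the whole of $H^2$ is of Tate type; thus Artin's equality $\rho(X)=22$ is exactly the assertion that every Tate class is algebraic. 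First I would place $X$ in a family by choosing a polarization of some degree $2d$, producing a point of the moduli space $M_{2d}$, which admits a smooth model over $\mathbb{Z}_{(p)}$ of relative dimension $19$ and whose special fibre contains the supersingular locus $M_{2d}^{\mathrm{ss}}$. By Artin's analysis of the formal Brauer group this locus is the deepest Newton stratum, of dimension $9$ and stratified by the Artin invariant.

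The engine of the proof is to transport the Tate conjecture across the Kuga--Satake correspondence. Over a suitable cover of $M_{2d}$ there is a period map to a Shimura variety of orthogonal type (the $\mathrm{GSpin}$ Shimura variety), that is, a moduli space of abelian varieties $A$ carrying a Clifford-algebra action, for which the primitive cohomology of the $K3$ surface embeds into $\mathrm{End}(H^1(A))$ as a sub-object compatible with the \'etale and crystalline realizations. I would attach to the supersingular $X$ its Kuga--Satake abelian variety $A$ by lifting $X$ to a $K3$ surface over a $p$-adic field (possible since $K3$ surfaces are unobstructed), forming the classical Kuga--Satake variety in characteristic $0$, and specializing it back after checking good reduction; the resulting $A/k$ is supersingular. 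Since the Tate conjecture holds for $A$, the Tate classes in the relevant summand of $H^2(A\times A)$ are all algebraic, and pulling them back along the Kuga--Satake correspondence would exhibit the Tate classes on $X$ as algebraic, giving $\rho(X)=22$. To run this for every supersingular $X$ and every $p\geq 5$ without any restriction on $2d$, I would couple it with the crystalline Torelli theorem of Ogus: the supersingular $K3$ surfaces of Picard number $22$ are classified by their N\'eron--Severi crystals and sweep out a $9$-dimensional period space, which I would compare to $M_{2d}^{\mathrm{ss}}$ so that the surfaces for which the transfer succeeds already exhaust the supersingular locus. For the finite-height fibres of the family the analogous statement is available from Nygaard--Ogus, and this complements the picture.

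The main obstacle is precisely the positive-characteristic incarnation of the Kuga--Satake construction and the algebraicity of the resulting correspondence. In characteristic $0$ the correspondence between $H^2(X)$ and $\mathrm{End}(H^1(A))$ is only known to be absolutely Hodge rather than algebraic, so over $k$ I cannot simply invoke it; instead I must use that over finite fields the Tate classes span the relevant motive and verify that the comparison is Frobenius-equivariant, which forces tight control of the crystalline realization exactly where $X$ is supersingular and its crystal is entirely of slope $1$. The two delicate points are, first, arranging the $p$-adic lift so that the Kuga--Satake variety acquires good reduction with the expected supersingular crystal, uniformly in $p\geq 5$ and independently of $2d$ --- this is what the earlier approaches could only do for $p$ large relative to the degree --- and, second, producing, from the algebraic cycles on $A$ furnished by the Tate conjecture, explicit algebraic classes on $X$ realizing the missing N\'eron--Severi classes. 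Controlling the degeneration of the period and Kuga--Satake data along $M_{2d}^{\mathrm{ss}}$, where the Hodge filtration collapses, is the step I expect to demand the most care.
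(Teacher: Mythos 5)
Your proposal correctly identifies the main characters (Kuga--Satake, the supersingular locus, lifting to characteristic zero), but it has two genuine gaps at exactly the places where the paper does its real work. First, the step ``since the Tate conjecture holds for $A$, \ldots pulling them back along the Kuga--Satake correspondence would exhibit the Tate classes on $X$ as algebraic'' is circular: as you yourself note, the correspondence $H^2(X)_{prim}\hookrightarrow \mathrm{End}(H^1(A))$ is not known to be algebraic, so algebraic cycles on $A\times A$ do not transport to algebraic cycles on $X$, and saying that the Tate classes span the motive and that the comparison is Frobenius-equivariant does not repair this. The paper's mechanism is entirely different: it never transports cycles from $A$ to $X$ in characteristic $p$. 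Instead it uses the (merely cohomological) embedding to control deformation theory --- an endomorphism $\psi$ of the Kuga--Satake abelian variety whose class lies in the image of $H^2_{prim}$ has obstruction space $H^2(X,\mathcal O_X)\simeq k$, so the pair $(A,\psi)$ deforms over a flat divisor in the deformation space; one then lifts to characteristic zero and invokes the Lefschetz $(1,1)$ theorem to convert $\psi$ into an honest line bundle, which specializes back to $X$. To force such $\psi$ to exist on the supersingular locus in the first place, the paper combines Oort's theorem that the supersingular locus in $\mathcal A_{g,d',n}$ is \emph{projective} (pulled back through a finite extension $\overline\kappa:\overline T\to\mathcal A_{g,d',n}$ of the Kuga--Satake map, built via Zariski's main theorem) with Borcherds' theorem that suitable unions of Noether--Lefschetz divisors are \emph{ample}; a positive-dimensional projective subvariety must meet an ample divisor. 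Your proposal contains no analogue of this intersection-theoretic engine, and without it there is no way to produce even one new line bundle on a supersingular $X$.

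Second, you flag the uniformity in $p$ versus the polarization degree $2d$ as ``what the earlier approaches could only do for $p$ large relative to the degree,'' but you offer no device to achieve it; working directly on $M_{2d}$ reproduces Maulik's restriction. The paper's solution is the Zarhin-style trick that is the entire content of its proof of Theorem \ref{main}: replace $S$ by the Hilbert scheme $S^{[2]}$, whose second cohomology is $H^2(S)\oplus\Z[E]$ with $q(E)=-2$, choose the polarization $Nh-[E]$ with $p\mid N$ so that its Beauville--Bogomolov square $N^2d-2Nq(h,E)-2\equiv -2\pmod p$ is automatically prime to $p$, and then apply the higher-dimensional Theorem \ref{weak} to the fourfold $S^{[2]}$. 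This reduction is why the paper must prove the Tate conjecture for divisors on holomorphic symplectic varieties rather than only for $K3$ surfaces, and it is absent from your outline. (A smaller point: the final push from a few line bundles to the full Picard number $22$ in the paper goes through an induction on the rank of lifted N\'eron--Severi lattices plus the Hodge conjecture for products of abelian surfaces, not through Ogus's crystalline Torelli theorem.)
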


Let $X$ be a smooth projective variety over a finite field $k$. Let $\ell$ be a prime number different from the characteristic of $k$. Tate conjectured in \cite{Ta65} that the Frobenius invariants of the space $H^{2i}(X_{\overline k}, \Q_{\ell}(i))$ are spanned by cohomology classes of algebraic cycles of codimension $i$. Using results of Nygaard-Ogus in \cite{NO85}, Theorem \ref{main} implies the following.

\begin{cor}
The Tate conjecture holds for $K3$ surfaces over finite fields of characteristic $p\geq 5$.
\end{cor}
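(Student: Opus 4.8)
The plan is to deduce the statement by combining Theorem \ref{main} with the finite-height case treated in \cite{NO85}, organising the argument according to the Newton polygon of the second crystalline cohomology group. Let $X$ be a $K3$ surface over a finite field $k$ of characteristic $p\geq 5$ with $q$ elements. Since $X$ has dimension $2$, the only nontrivial case of the conjecture is $i=1$, so the task is to show that the cycle class map sends $\mathrm{NS}(X_{\overline k})\otimes\Q_{\ell}$ onto the Frobenius-invariant subspace of $H^2(X_{\overline k},\Q_{\ell}(1))$. I would first record the dichotomy governing $K3$ surfaces: the formal Brauer group $\widehat{\mathrm{Br}}(X)$ has height $h$ which is either finite, with $1\leq h\leq 10$, or infinite, the latter being by definition the supersingular case. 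This is exactly the distinction between a non-constant and a straight (slope $1$) Newton polygon on $H^2_{\mathrm{cris}}$.

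In the finite-height case I would simply invoke \cite{NO85}: Nygaard and Ogus prove the Tate conjecture for $K3$ surfaces of finite height over finite fields of characteristic $p\geq 5$, via the Kuga--Satake construction, crystalline comparison, and Tate's theorem on homomorphisms of abelian varieties over finite fields. This is precisely where the hypothesis $p\geq 5$ enters on their side, and it is compatible with ours. Thus only the supersingular case remains; indeed, the role of \cite{NO85} here is exactly to reduce the Tate conjecture to Artin's conjecture for the surfaces of infinite height.

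For the supersingular case I would apply Theorem \ref{main} to $X_{\overline k}$, which is a supersingular $K3$ surface over an algebraically closed field of characteristic $p\geq 5$; it gives $\mathrm{rank}\,\mathrm{NS}(X_{\overline k})=22$. Since the cycle class map $\mathrm{NS}(X_{\overline k})\otimes\Q_{\ell}\to H^2(X_{\overline k},\Q_{\ell}(1))$ is injective and both spaces have dimension $22$, it is an isomorphism, so every class in $H^2$ is algebraic. Taking Frobenius invariants on both sides then identifies $H^2(X_{\overline k},\Q_{\ell}(1))^{\mathrm{Frob}}$ with $\bigl(\mathrm{NS}(X_{\overline k})\otimes\Q_{\ell}\bigr)^{\mathrm{Frob}}$, which is spanned by classes of divisors defined over $k$; this is the Tate conjecture for $X$.

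The point that requires care---rather than the main difficulty, which is entirely contained in Theorem \ref{main} and \cite{NO85}---is the passage from the geometric statement over $\overline k$ to the statement over the finite field $k$. In the supersingular case this is immediate, since $\mathrm{NS}(X_{\overline k})\otimes\Q_{\ell}$ already exhausts $H^2$, so Frobenius acts on an entirely algebraic space and one need only take invariants; no independent semisimplicity input is required. It then remains only to confirm that the height dichotomy is exhaustive and that both cited inputs apply under the single hypothesis $p\geq 5$, after which the two cases assemble into the corollary.
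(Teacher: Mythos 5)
Your proposal is correct and follows essentially the same route as the paper: the corollary is obtained by splitting according to the height of the formal Brauer group, invoking Nygaard--Ogus \cite{NO85} in the finite-height case, and using Theorem \ref{main} (rank of $\mathrm{NS}(X_{\overline k})$ equal to $22$) in the supersingular case, where the cycle class map is then an isomorphism onto all of $H^2(X_{\overline k},\Q_{\ell}(1))$ and one simply takes Frobenius invariants. The paper leaves this deduction implicit, and your write-up supplies exactly the intended details.
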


As a consequence of the main theorem of \cite{LMS11}, this implies the following finiteness result.

\begin{cor}
Let $k$ be a finite field of characteristic $p\geq 5$. There are only finitely many $K3$ surfaces over $k$ up to isomorphism.
\end{cor}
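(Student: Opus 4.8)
The plan is to deduce the finiteness statement directly from the Tate conjecture for $K3$ surfaces, which is the content of the preceding corollary, by invoking the equivalence established in \cite{LMS11}. Concretely, the main theorem of \cite{LMS11} asserts that, for a fixed prime $p$, the Tate conjecture for $K3$ surfaces over all finite fields of characteristic $p$ is equivalent to the finiteness of the set of isomorphism classes of $K3$ surfaces over each such field; since Theorem~\ref{main} yields the Tate conjecture in characteristic $p\geq 5$, the finiteness follows at once. The substance of the implication ``Tate $\Rightarrow$ finiteness'' lies in \cite{LMS11}, and I would organize a self-contained proof of it in three steps, as follows.

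First I would use the Tate conjecture to bound the discriminant of the N\'eron--Severi lattice. Granting Tate, the Brauer group of $X$ is finite, so the Artin--Tate formula is available and expresses the product $|\mathrm{Br}(X)|\cdot|\mathrm{disc}\,\mathrm{NS}(X)|$ in terms of the leading coefficient at $s=1$ of the zeta function of $X$. The Weil bounds control this coefficient purely in terms of $q=|k|$ and the Picard number $\rho\leq 22$, and since $|\mathrm{Br}(X)|$ is a positive integer, this gives an absolute bound on $|\mathrm{disc}\,\mathrm{NS}(X)|$ depending only on $q$.

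Second I would pass from bounded discriminant to a polarization of bounded degree. The lattice $\mathrm{NS}(X)$ is an even lattice of signature $(1,\rho-1)$ with $\rho\leq 22$ and discriminant bounded as above; there are only finitely many such lattices up to isometry, and in the ample cone of each one finds, after using the reflections in the $(-2)$-classes to reach the fundamental chamber, a vector of bounded self-intersection. This produces on every $K3$ surface over $k$ an ample line bundle whose degree $d$ is bounded by a constant depending only on $q$.

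Finally I would invoke the geometry of moduli: for each fixed $d$ the moduli space of $d$-polarized $K3$ surfaces is of finite type, hence has finitely many $k$-points, and only finitely many values of $d$ occur, so the set of all such polarized $K3$ surfaces over $k$ is finite; since every $K3$ surface over $k$ carries a polarization of bounded degree, the forgetful map from this finite set onto the isomorphism classes of $K3$ surfaces over $k$ is surjective, and finiteness follows. The main obstacle is the first step: the effective bound on the discriminant rests on the validity of the Artin--Tate formula and is where the deep input genuinely enters, the lattice-theoretic and moduli-theoretic steps being comparatively formal once the discriminant is controlled.
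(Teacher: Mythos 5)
Your proposal is correct and follows essentially the same route as the paper: the corollary is deduced by combining Theorem~\ref{main} (equivalently, the Tate conjecture for $K3$ surfaces in characteristic $p\geq 5$) with the main theorem of \cite{LMS11}, which gives the implication from Tate to finiteness. Your additional sketch of the internal mechanism of \cite{LMS11} (Artin--Tate formula, bounded discriminant, bounded polarization degree, finite type of moduli) is accurate but goes beyond what the paper records, which simply cites that result.
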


With the extra assumption that $p$ is large enough with respect to the degree of a polarization of the $K3$ surface, Theorem \ref{main} is the main result of \cite{Mau12}. Our strategy uses that of \cite{Mau12} as a starting point. In particular, we also use, as a key geometric input, Borcherds' construction of automorphic forms for $O(2,n)$ \cite{Bo98, Bo99}, which allows one to find ample divisors supported on the Noether-Lefschetz locus for $K3$ surfaces.

A key point of Maulik's argument is to show that $K3$ surfaces have semistable reduction in equal positive characteristic. This is where the restrictions on the characteristic of the base field appear. Maulik then proceeds to showing that supersingular $K3$ are elliptic, which is enough to conclude that they satisfy the Tate conjecture by a result of Artin in \cite{Ar74}.

\bigskip

In this paper, we manage to circumvent the use of both semistable reduction for $K3$ surfaces and Artin's theorem on elliptic $K3$ surfaces, thus offering a direct proof of the Tate conjecture that gets rid of restrictions on the characteristic of the base field that appeared in \cite{Mau12}. These arguments allow us to prove the Tate conjecture for divisors on certain holomorphic symplectic varieties in any dimension, where showing semistable reduction seems out of reach at the moment, and where it is not clear what the analog of Artin's result might be. 

Recall that a complex irreducible holomorphic symplectic variety is a complex smooth, simply-connected variety $X$ such that $H^0(X, \Omega^2_X)$ is spanned by a unique holomorphic form that is everywhere non-degenerate. An important example is given by varieties of $K3^{[n]}$ type defined as the deformations of the Hilbert scheme of points on a $K3$ surface, see \cite{Be83}. The second singular cohomology group of a complex irreducible holomorphic symplectic variety is endowed with a canonical form called the Beauville-Bogomolov form, see \cite{Be83, Hu99}.

We deduce Theorem \ref{main} from the following result on the Tate conjecture for higher-dimensional varieties, with some restrictions on $p$.

\begin{thm}\label{weak}
Let $Y$ be a complex projective irreducible holomorphic symplectic variety of dimension $2n$ with second Betti number $b_2>3$. Let $h$ be the cohomology class of an ample line bundle on $Y$, let $d=h^{2n}$ and let $q$ be the Beauville-Bogomolov form.

Let $p\geq 5$ be a prime number. Assume that $p$ is prime to $d$ and that $p> 2n$. Suppose that $Y$ can be defined over a finite unramified extension of $\Q_p$ and that $Y$ has good reduction at $p$. Assume also that  $q$ induces a non-degenerate quadratic form on the reduction modulo $p$ of the primitive lattice in the second cohomology group of $Y$. Then the reduction $X$ of $Y$ at $p$ satisfies the Tate conjecture for divisors.
\end{thm}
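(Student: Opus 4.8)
The plan is to prove the Tate conjecture for divisors on $X$ by reducing it to the Tate conjecture for abelian varieties over finite fields (known by Tate and Faltings) through an integral Kuga--Satake construction, and then to produce the required divisors by specializing from characteristic zero, using Borcherds' ample Noether--Lefschetz divisors to handle the supersingular case. First I would recall that the polarized deformations of $Y$ are unobstructed and that the period map identifies the deformation space with an open subset of the period domain attached to the quadratic space $(P^2(Y), q)$, where $P^2$ denotes the primitive part of $H^2$; this is a space of signature $(2, b_2-3)$, and the hypothesis $b_2>3$ guarantees the correct signature. The hypotheses $p\geq 5$, $p\nmid d$, $p>2n$, and non-degeneracy of $q$ modulo $p$ are exactly what is needed for the associated orthogonal (GSpin) Shimura variety to admit a smooth integral model at $p$ in the sense of Kisin and Madapusi Pera; equivalently, one works directly with the deformation space of $X$ together with the good reduction of Kuga--Satake abelian varieties. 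Spreading $Y$ out and using good reduction, the point $[X]$ lies in the special fibre, and I attach to $X$ its Kuga--Satake abelian variety $A$ over a finite field $k$.

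Next I would transport Tate classes from $X$ to $A$. The Kuga--Satake construction yields an embedding $P^2 \hookrightarrow \mathrm{End}(H^1(A))$ compatible with the Galois action on $\ell$-adic cohomology and, crucially, with the crystalline Frobenius; the crystalline compatibility is where the results of Nygaard--Ogus and the integral comparison isomorphisms enter, and where $p>2n$ and $p\nmid d$ are used to control torsion and divided powers. Under this embedding a Tate class $\alpha \in H^2(X_{\bar k}, \Q_\ell(1))^{\mathrm{Gal}}$ in étale cohomology becomes a Frobenius-invariant element of $\mathrm{End}(H^1(A)) \otimes \Q_\ell$, which by Tate's theorem for $A$ is the $\ell$-adic realization of an actual endomorphism of $A$, that is, a special endomorphism. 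Thus every Tate class on $X$ is accounted for by an algebraic object on the abelian side.

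The heart of the argument is then to show that each such $\alpha$ is itself the class of a divisor on $X$. The special endomorphism produced above cuts out a Heegner (Noether--Lefschetz) divisor $Z$ on the moduli space, and a characteristic-zero lift $\tilde X$ of $X$ lying on the generic fibre of $Z$ carries a lift of $\alpha$ that is of Hodge type $(1,1)$; since $\tilde X$ is a complex holomorphic symplectic variety, the Lefschetz $(1,1)$ theorem makes this Hodge class algebraic, and specialization of divisors returns an algebraic class on $X$ equal to $\alpha$. The essential geometric input, following the strategy of \cite{Mau12}, is Borcherds' construction of automorphic forms for $O(2,n)$: it provides ample divisors supported on the union of Noether--Lefschetz loci, so that these loci meet any complete curve in the special fibre. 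This ampleness is what allows one to find, through the point $[X]$, enough Heegner divisors whose generic points admit the lifts above.

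The main obstacle is the supersingular case, where all of $P^2$ consists of Tate classes and no single lift $\tilde X$ can make every class simultaneously of type $(1,1)$; this is precisely the situation where \cite{Mau12} invoked semistable reduction and Artin's theorem on elliptic $K3$ surfaces. I expect to circumvent both by using the Borcherds ampleness more forcefully: instead of lifting all classes at once, I would produce a complete curve in the special fibre through $[X]$, let it meet the ample Noether--Lefschetz divisors repeatedly, accumulate algebraic divisors that specialize to $X$, and then argue that the classes so obtained span the full space of Tate classes. Making this accumulation argument work without semistable reduction, and verifying that the specialization map on N\'eron--Severi groups loses nothing at the supersingular point, is the crux and the main technical novelty of the proof.
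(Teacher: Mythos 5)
Your proposal correctly identifies the overall architecture --- reduce the finite-height case to Nygaard--Ogus, transport the problem to the Kuga--Satake abelian variety, and use Borcherds' ample Noether--Lefschetz divisors to force algebraic classes onto the supersingular fibre --- but it stops exactly where the paper's actual content begins, and the three steps you defer are genuine gaps, not technicalities. First, you say you would ``produce a complete curve in the special fibre through $[X]$'' inside the supersingular locus, but you give no mechanism for its existence: in the deformation space $T$ there is no a priori reason for the supersingular locus to be proper, and this properness is precisely what Maulik needed semistable reduction for. The paper's substitute is a partial compactification $\overline T$ of $T$, built with Zariski's main theorem so that the Kuga--Satake map extends to a \emph{finite} morphism $\overline\kappa:\overline T\to\mathcal A_{g,d',n}$; properness of the supersingular locus is then pulled back from Oort's theorem that the supersingular locus of $\mathcal A_{g,d',n}$ is projective (Propositions \ref{finite} and \ref{proper}). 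Second, the ample Borcherds divisor only guarantees that some Noether--Lefschetz divisor meets the supersingular component $C$ \emph{somewhere}; you still must propagate that intersection point to the point $[X]$ itself. This is Proposition \ref{inclus}: one uses that the cokernel of the specialization map on endomorphisms of the supersingular Kuga--Satake abelian scheme over $C$ is killed by a power of $p$, that the obstruction to deforming such an endomorphism lies in the one-dimensional space $H^2(X,\mathcal O_X)$ (so the deformation locus is a divisor), and Ogus' argument that this divisor is flat over $W$, to conclude that the closure of some $D_{\Lambda_r}$ contains all of $C$ and in particular passes through $[X]$. Your ``accumulation'' has no analogue of this propagation step.

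Third, and most seriously, the endgame cannot work as you describe it. Any characteristic-zero lift of $X$ is a complex projective holomorphic symplectic variety, so its transcendental lattice always has rank at least $2$; in fact the paper's induction stalls at corank $3$ for dimension reasons tied to $\dim C$. Hence no amount of lifting single classes to characteristic zero, applying the Lefschetz $(1,1)$ theorem, and specializing back can ever make the accumulated classes span the full space of Tate classes of a supersingular $X$. The paper closes this gap by a different mechanism (Lemma \ref{Morrison}): once the transcendental part of a lift has rank at most $3$, its Kuga--Satake variety is isogenous to a square of an abelian surface, the Hodge conjecture is known for products of abelian surfaces, and therefore the Kuga--Satake projector is algebraic; specializing this correspondence and using that the crystalline cohomology of a supersingular abelian variety is spanned by algebraic cycles produces the remaining classes. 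Without this step, or some replacement for it, your proof does not terminate.
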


In the case of varieties of $K3^{[n]}$ type, the assumptions of the theorem have the following explicit form.

\begin{cor}\label{K3n}
Let $Y$ be a complex polarized irreducible holomorphic symplectic variety of $K3^{[n]}$ type. Let $h$ be the cohomology class of an ample line bundle on $Y$, and let $d=q(h)$, where $q$ is the Beauville-Bogomolov form.

Let $p\geq 5$ be a prime number. Assume that $p$ is prime to $d$ and that $p> 2n$. Suppose that $Y$ can be defined over a finite unramified extension of $\Q_p$ and that $Y$ has good reduction at $p$. Then the reduction $X$ of $Y$ at $p$ satisfies the Tate conjecture.
\end{cor}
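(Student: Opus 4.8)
The plan is to deduce the corollary from Theorem \ref{weak} by checking its hypotheses for varieties of $K3^{[n]}$ type and then upgrading its conclusion from divisors to cycles of arbitrary codimension. First I would recall Beauville's description of the second cohomology lattice: for $Y$ of $K3^{[n]}$ type with $n\geq 2$ one has
\[
H^2(Y,\Z)\cong U^{\oplus 3}\oplus E_8(-1)^{\oplus 2}\oplus\langle -2(n-1)\rangle,
\]
a lattice $\Lambda$ of rank $23$, so that $b_2=23>3$ while $\dim Y=2n$; the case $n=1$ is Theorem \ref{main} itself. Thus the structural hypotheses of Theorem \ref{weak} are met, and the odd cohomology of $X$ vanishes, so only even degrees will need attention. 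The two quantities denoted $d$ in the two statements must be reconciled, since in Theorem \ref{weak} one has $d=h^{2n}$ whereas here $d=q(h)$. The Fujiki relation $h^{2n}=c_n\,q(h)^n$, with Fujiki constant $c_n=(2n)!/(2^n n!)=(2n-1)!!$, settles this: every factor of $(2n-1)!!$ is an odd integer $\leq 2n-1<p$, so $p\nmid c_n$, and combined with $p\nmid q(h)$ this gives $p\nmid h^{2n}$ as required.

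Next I would verify that $q$ remains non-degenerate on $h^\perp\otimes\mathbb{F}_p$. The discriminant of $\Lambda$ equals $2(n-1)$ up to sign, and $2(n-1)<2n<p$, so $\Lambda\otimes\mathbb{F}_p$ is a non-degenerate quadratic space over $\mathbb{F}_p$. Since $q(h)=d$ is prime to $p$, the reduction $\bar h$ is anisotropic, whence its orthogonal complement in $\Lambda\otimes\mathbb{F}_p$ is non-degenerate. Finally, the kernel of the homomorphism $\Lambda\to\Z/d$, $x\mapsto q(x,h)\bmod d$, is exactly $\langle h\rangle\oplus h^\perp$, so the index $[\Lambda:\langle h\rangle\oplus h^\perp]$ divides $d$ and is therefore prime to $p$; consequently $h^\perp\otimes\mathbb{F}_p$ coincides with $(\bar h)^\perp$ and is non-degenerate. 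Theorem \ref{weak} then applies and yields the Tate conjecture for divisors on $X$.

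The hard part is the passage from divisors to arbitrary codimension, where the special geometry of $K3^{[n]}$-type varieties is essential. I would use the Looijenga--Lunts--Verbitsky description of $H^*(X_{\overline k},\Q_\ell)$ as a representation of $\mathfrak{so}(\tilde H)$, where $\tilde H=H^2(X_{\overline k},\Q_\ell)\oplus U$ enlarges the Beauville--Bogomolov form by a hyperbolic plane of Tate classes. The crucial input making this usable over a finite field is that the standard conjectures hold for varieties of $K3^{[n]}$ type (Charles--Markman): the Lefschetz operator, its dual, and the Künneth projectors are all given by algebraic correspondences, so the whole $\mathfrak{so}(\tilde H)$-action and the decomposition of $H^*$ into isotypic pieces are algebraic and Frobenius-equivariant. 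Each irreducible constituent is built from $\tilde H$ through the representation theory of the orthogonal group, so a Frobenius-invariant class in $H^{2i}(i)$ is obtained by applying invariant tensors of that group --- which correspond to algebraic classes such as the Beauville--Bogomolov class and the Chern classes of $X$ --- to Frobenius-invariants of $\tilde H(1)$. Those invariants are precisely the Tate classes of $H^2(1)$, algebraic by the divisor case just established, together with the Tate lines in $U$; applying the algebraic $\mathfrak{so}(\tilde H)$-action then exhibits every Tate class as algebraic. The delicate point I expect to demand the most care is exactly the compatibility of the LLV action with Frobenius and the realization of the relevant invariant tensors by algebraic cycles, which is where the standard conjectures of Charles--Markman are indispensable.
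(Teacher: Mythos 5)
Your first two paragraphs are correct and constitute essentially the paper's entire proof: one checks $b_2>3$, uses the Fujiki relation $h^{2n}=\frac{(2n)!}{2^n n!}\,q(h)^n$ together with $p>2n$ and $p\nmid q(h)$ to conclude $p\nmid h^{2n}$, and uses the explicit Beauville lattice (discriminant $2(n-1)<p$, unimodular for $n=1$) plus $p\nmid q(h)$ to get non-degeneracy of $q$ on $H^2(Y,\Z)_{prim}\otimes\Z/p\Z$; then one invokes Theorem \ref{weak}. (There is no need to defer $n=1$ to Theorem \ref{main}; it is handled uniformly.) Note, however, that the conclusion of the corollary is to be read as the Tate conjecture \emph{for divisors}: that is all Theorem \ref{weak} yields, it is all the paper proves here, and it is all that is used later --- the proof of Corollary \ref{cubic} quotes Corollary \ref{K3n} only for divisors on the Fano variety of lines, and the passage to codimension $2$ on the cubic is done via the Beauville--Donagi isomorphism and hard Lefschetz, not via any general principle for $K3^{[n]}$-type varieties.

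Your third paragraph, which attempts to upgrade to all codimensions, has a genuine gap. Even granting the LLV module structure, its Frobenius-equivariance, and the Charles--Markman standard conjectures, the assertion that every Tate class in $H^{2i}(i)$ is obtained by applying $O(\tilde H)$-invariant tensors to Tate classes of $\tilde H(1)$ is false. Already for $i=2$ and $n\geq 2$, Verbitsky's theorem embeds $\mathrm{Sym}^2H^2$ into $H^4$; if $X$ has finite height, the transcendental part $T\subset H^2(1)$ carries no Tate classes but its Frobenius eigenvalues occur in inverse pairs $\beta,\beta^{-1}$, so $\mathrm{Sym}^2T(1)$ contains on the order of $\dim T/2$ independent Tate classes, whereas the space of orthogonal-invariant tensors there is one-dimensional (the Beauville--Bogomolov class). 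These exotic Tate classes are exactly the difficulty of the Tate conjecture for self-products of $K3$ surfaces, which neither the divisor case nor the standard conjectures resolve; moreover, the algebraicity of the LLV isotypic projectors does not follow from the Lefschetz standard conjecture alone. So this step would fail --- but it is not needed for what the paper actually claims and uses.
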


\noindent\textbf{Remark.} The assumption on $p$ ensures that the Hodge to de Rham spectral sequence of $X$ degenerates at $E_1$ by \cite{DI87}. 

\noindent\textbf{Remark.} For $K3$ surfaces, Theorem \ref{weak} is weaker than Theorem \ref{main}. However, proving Theorem \ref{weak} for fourfolds is a key step in removing the assumptions on the characteristic of the base field to get Theorem \ref{main}. We strongly expect that an extension of our method might relax the hypotheses on $p$ even in the higher-dimensional case.

\bigskip

Using the correspondence between cubic fourfolds and certain holomorphic symplectic varieties, we get the following instance of the Tate conjecture.

\begin{cor}\label{cubic}
Let $k$ be a finite field of characteristic $p\geq 5$, and let $X$ be a smooth cubic hypersurface in $\mathbb P^5_k$. Then $X$ satisfies the Tate conjecture for cycles of codimension $2$. 
\end{cor}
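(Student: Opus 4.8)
The plan is to deduce the Tate conjecture for codimension-2 cycles on a cubic fourfold $X/k$ from Corollary \ref{K3n} by transporting the problem to an associated holomorphic symplectic variety. The classical correspondence (over $\C$, due to Beauville--Donagi) associates to a smooth cubic fourfold its variety of lines $F(X)$, which is a four-dimensional irreducible holomorphic symplectic variety of $K3^{[2]}$ type; the Abel--Jacobi map induces a Hodge isometry between the primitive cohomology $H^4_{\mathrm{prim}}(X,\Z)(1)$ of the cubic and the primitive part of $H^2(F(X),\Z)$ equipped with its Beauville--Bogomolov form. First I would lift the situation to characteristic $0$: given $X$ over the finite field $k$ of characteristic $p\geq 5$, choose a smooth projective lift $\mathcal X$ over a finite unramified extension $W$ of $\Z_p$ (possible by smoothness of the moduli of cubic fourfolds and since deformations of $X$ are unobstructed), with generic fiber $X_K$ over a $p$-adic field and special fiber $X$. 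The relative Fano variety of lines $F(\mathcal X)\to \operatorname{Spec} W$ is then smooth and proper, giving a holomorphic symplectic variety $Y:=F(\mathcal X_{\bar K})$ of $K3^{[2]}$ type over a number field, with good reduction $F(X)$ at $p$, to which Corollary \ref{K3n} applies once the numerical hypotheses $p\nmid d$ and $p>2n=4$ are met.

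Next I would check the two hypotheses of Corollary \ref{K3n}. The condition $p>2n$ reads $p>4$, which holds since $p\geq 5$; the condition $p\nmid d=q(h)$ requires computing the Beauville--Bogomolov square of a natural polarization $h$ on $F(X)$. For the Plücker polarization coming from the embedding $F(X)\hookrightarrow \operatorname{Gr}(2,6)\hookrightarrow \PP^{14}$, the value of $q(h)$ is an explicit integer (one computes $q(g)=6$ for the Plücker class $g$ of a cubic fourfold of type $K3^{[2]}$), so for $p\geq 5$ with $p\neq$ a small finite set of excluded primes the coprimality holds; if the chosen polarization fails, I would replace it by a suitable auxiliary ample class whose BB-square is prime to $p$, using that the Néron--Severi lattice contains classes of many different squares. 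With these verified, Corollary \ref{K3n} gives that $F(X)$ satisfies the Tate conjecture for divisors, i.e. Frobenius invariants in $H^2(F(X)_{\bar k},\Q_\ell(1))$ are spanned by divisor classes.

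The final and crucial step is to descend this statement back to the cubic fourfold, i.e. to promote Tate for divisors on $F(X)$ to Tate for codimension-2 cycles on $X$. The key is that the Abel--Jacobi correspondence is algebraic and defined integrally: the incidence correspondence $P\subset F(\mathcal X)\times_W \mathcal X$ induces, after specialization to the special fiber, a cycle-theoretic map $H^4(X_{\bar k},\Q_\ell(1))\to H^2(F(X)_{\bar k},\Q_\ell(1))$ that is Galois-equivariant (commutes with Frobenius) and, by smooth-proper base change comparing the $\ell$-adic cohomology of the special and geometric generic fibers, is an isomorphism onto the primitive part matching the characteristic-$0$ Hodge isometry. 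Thus a Frobenius-invariant class in $H^4(X_{\bar k},\Q_\ell(1))$ maps to a Frobenius-invariant class in $H^2(F(X)_{\bar k},\Q_\ell(1))$, which by the previous step is algebraic, say a $\Q_\ell$-combination of divisor classes $D_i$; pulling these back through the correspondence $P$ produces algebraic cycles of codimension $2$ on $X$ whose classes span the original Tate class. The main obstacle I anticipate lies precisely here: one must verify that the correspondence remains an isomorphism on the relevant Frobenius-invariant subspaces in characteristic $p$ (not merely after tensoring with $\overline{\Q_\ell}$ over $\C$), which requires good reduction of the incidence variety and compatibility of the $\ell$-adic specialization isomorphisms with the cycle class maps on both sides --- a standard but technically delicate application of smooth base change and the compatibility of algebraic correspondences with specialization, together with the observation that the non-primitive (algebraic) part of $H^4(X)$ is spanned by powers of the hyperplane class and hence trivially Tate.
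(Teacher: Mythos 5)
Your overall strategy is the paper's: lift the cubic to $W$, pass to the relative Fano variety of lines, invoke Beauville--Donagi to see that the complex fiber is of $K3^{[2]}$ type with $q(h)=6$ for the Pl\"ucker class (so the coprimality $p\nmid q(h)$ holds for \emph{every} $p\geq 5$, since $6=2\cdot 3$ --- there is no ``small finite set of excluded primes'' and no need for an auxiliary polarization), apply Corollary \ref{K3n} to get Tate for divisors on $F(X)$, and descend through the incidence correspondence. The hypothesis checks and the use of smooth proper base change to see that $\phi:H^4(X_{\overline k},\Q_\ell(2))_{prim}\to H^2(F_{\overline k},\Q_\ell(1))_{prim}$ is a Frobenius-equivariant isomorphism are all fine, and your remark that the non-primitive part of $H^4$ is spanned by $h^2$ is correct.

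The gap is in the last step, and it is not the one you flagged. ``Pulling back'' a divisor class on $F$ through the correspondence $P\subset F\times X$ does not produce a codimension-$2$ cycle on $X$: if $p:P\to F$ and $q:P\to X$ are the projections, then $P$ is $5$-dimensional and $q$ has $1$-dimensional fibers, so the transpose correspondence $q_*p^*$ maps $H^2(F)$ to $H^0(X)$ --- the degrees simply do not match, and cycle-theoretically the image of a divisor of $F$ under the correspondence is a $4$-dimensional cycle in $X$, not a surface. What the paper does instead is first apply the hard Lefschetz theorem on $F$: cupping with $h^2$ identifies $H^2(F_{\overline k},\Q_\ell(1))$ with $H^6(F_{\overline k},\Q_\ell(3))$ equivariantly and carries divisor classes to classes of $1$-cycles, so Tate for divisors on $F$ yields that the Frobenius invariants of $H^6(F_{\overline k},\Q_\ell(3))$ are spanned by $1$-cycles. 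Only then does one apply the Poincar\'e dual $\psi:H^6(F_{\overline k},\Q_\ell(3))_{prim}\to H^4(X_{\overline k},\Q_\ell(2))_{prim}$ of $\phi$, which is given by the algebraic correspondence $P$ with the correct degree shift (a curve of lines in $F$ sweeps out a surface in $X$) and is surjective onto the primitive part. Your stated worry --- whether the correspondence stays an isomorphism on Frobenius invariants in characteristic $p$ --- is handled automatically by smooth proper base change and equivariance; the missing ingredient is the hard Lefschetz step converting divisors into $1$-cycles before transporting them back to $X$.
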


Note that the Tate conjecture for cubic fourfolds and divisors on holomorphic symplectic varieties over number fields was proved by Andr\'e in \cite{An96}. 

\bigskip

As in Andr\'e's work, we make heavy use of the Kuga-Satake correspondence between Hodge structures of $K3$ type and certain abelian varieties. We use this correspondence as well as general ideas on the deformation of cycle classes to prove the algebraicity of some cohomology classes. This type of argument is quite close in spirit to well-known results in Hodge theory around questions of algebraicity of Hodge loci. 

The main point, which appears in a slightly more involved form in Proposition \ref{inclus}, is that while the Kuga-Sataka correspondence is not known to be algebraic as the Hodge conjecture predicts, its existence is enough to provide mixed characteristic analogs of the Noether-Lefschetz loci, namely, universal deformation spaces for pairs $(X, \alpha)$, where $\alpha$ is a suitable Galois-invariant cohomology class. This allows one to study the lifting of such pairs to characteristic zero.

This method has the advantage of replacing degeneracy issues for family of holomorphic symplectic varieties by similar problems for abelian varieties, which are much easier to deal with. As a consequence, we do not use any of the birational arguments of \cite{Mau12}. Of course, these results are deep and beautiful in their own right. 

\bigskip

The plan of the paper is the following. We start by explaining how the results stated above can all be reduced to Theorem \ref{weak} for supersingular varieties. 

In section \ref{preliminary}, we gather some generalities around the deformation problems we deal with and recall some results of \cite{Mau12}. We state and prove them in the generality we need. 

In order to facilitate the exposition, we show in section \ref{proof1} that, with the notations of Theorem \ref{weak} when $X$ is supersingular, the Picard number of $X$ is at least $2$. We achieve this result by introducing a partial compactification of the Kuga-Satake mapping in mixed characteristic and using arguments related to the geometry of Hodge loci. Proposition \ref{inclus} contains the main geometric idea. 

In the last section of the paper, we prove Theorem \ref{weak} using the ideas of section \ref{proof1} and an induction process. Some of the lifting results there might be of independent interest. A surprising phenomenon is that the induction process does not allow us to directly show the Tate conjecture. However, we are able to use known cases of the Hodge conjecture for low-dimensional abelian varieties to conclude the proof.

\bigskip

We recently learned that Keerthi Madapusi Pera has announced results on the Tate conjecture for $K3$ surfaces. His proof seems to involve very different methods building on recent advances on the theory of canonical integral models of Shimura varieties.

\bigskip

\noindent\textbf{Acknowledgements.} The work on this paper started when I had the chance of discussing the paper \cite{Mau12} with Daniel Huybrechts during the workshop "Advances in hyperk\"ahler and holomorphic symplectic geometry" at the BIRS in Banff. 

I am very grateful to D. Huybrechts for the many discussions we had on this subject and his many remarks on early versions of this paper. I would also like to thank the organizers of the workshop for creating a stimulating atmosphere. I thank Davesh Maulik for helpful email correspondence, Olivier Benoist for interesting discussions and H\'el\`ene Esnault for helpful remarks on a first draft of this paper.

\section{Preliminary reductions}\label{reduction}

\subsection{Reduction to Theorem \ref{weak}}

In this section, we show how Theorem \ref{main}, Corollary \ref{K3n} and Corollary \ref{cubic} can be deduced from Theorem \ref{weak}.

\begin{proof}[Proof of Corollary \ref{K3n}]
Let $X$ and $Y$ be as in Corollary \ref{K3n}. We only need to show that $X$ and $Y$ satisfy the hypotheses of Theorem \ref{weak}, that is, that $p$ is prime to $h^{2n}$ and that  the Beauville-Bogomolov form $q$ induces a non-degenerate quadratic form on $H^2(Y, \Z)_{prim}\otimes \Z/p\Z$.

The second Betti number of $Y$ is either $22$ or $23$, hence it is strictly larger than $3$. Since $p>2n$ and $\frac{(2n)!}{n!2^n}q(h)^n=h^{2n}$, see for instance \cite[4.1.4]{OG08}, $p$ is prime to $h^{2n}$. Furthermore, the explicit description of the Beauville-Bogomolov form on the lattice $H^2(Y, \Z)$ as in \cite[Section 9]{Be83} shows that the $q$ induces a non-degenerate quadratic form on $H^2(Y, \Z)\otimes \Z/p\Z$. Since $p$ is prime to $q(h)$, $q$ induces a non-degenerate quadratic form on $H^2(Y, \Z)_{prim}\otimes \Z/p\Z$.

\end{proof}

\begin{proof}[Proof of Corollary \ref{cubic}]
Let $k$ be a finite field of characteristic at least $5$, and let $X$ be a smooth cubic hypersurface in $\mathbb P^5_k$. Let $F$ be the Fano variety of lines in $X$. It is a smooth projective variety of dimension $4$. 

Let $W$ be the ring of Witt vectors of $k$, and let $K$ be the fraction field of $W$. The hypersurface $X$ lifts to a cubic hypersurface $\mathcal X$ over $W$. Taking the Fano variety of lines gives a smooth lifting $\mathcal F$ of $F$ over $W$.

By results of Beauville-Donagi in \cite{BD85}, given an embedding of $K$ into $\C$, the variety $\mathcal F_{\C}$ is of $K3^{[2]}$ type. If $q$ is the Beauville-Bogomolov form, and $h$ is the ample class on $\mathcal F_{\C}$ is the ample class corresponding to the Pl\"ucker embedding, then $q(h)=6$. As a consequence, Corollary \ref{K3n} shows that $F$ satisfies the Tate conjecture for divisors.

\bigskip

Let $\ell$ be a prime number invertible in $k$. The incidence correspondence between $X$ and its variety of lines induces a morphism
$$\phi : H^4(X_{\overline k}, \Q_{\ell}(2))\ra H^2(F_{\overline k}, \Q_{\ell}(1))$$
that is equivariant with respect to the Frobenius action on both sides.

In \cite{BD85}, Beauville and Donagi show that the corresponding morphism over $\C$ induces an isomorphism between the primitive parts of the cohomology groups. By the smooth base change theorem, $\phi$ induces an isomorphism between the primitive parts of $H^4(X_{\overline k}, \Q_{\ell}(2))$ and $H^2(F_{\overline k}, \Q_{\ell}(1))$  as well. Consider the Poincar\'e dual of $\phi$
$$\psi : H^{6}(F_{\overline k}, \Q_{\ell}(3))_{prim}\ra H^4(X_{\overline k}, \Q_{\ell}(2))_{prim}.$$
It is also defined by an algebraic correspondence. In particular, it sends cohomology classes of $1$-cycles in $F_{\overline k}$ to classes of $2$-cycles in $X_{\overline k}$. 

By the hard Lefschetz theorem, and since $F$ satisfies the Tate conjecture for divisors, the group of Frobenius-invariant classes in $H^{6}(F_{\overline k}, \Q_{\ell}(3))$ is spanned by cohomology classes of $1$-cycles. This shows that the Frobenius-invariant part of $H^4(X_{\overline k}, \Q_{\ell}(2))$ is spanned by cohomology classes of codimension $2$ cycles and shows that cubic fourfolds satisfy the Tate conjecture.
\end{proof}

We now show how Corollary \ref{K3n} implies the Tate conjecture for $K3$ surfaces in any characteristic different from $2$ and $3$.

\begin{proof}[Proof of Theorem \ref{main}]
Let $S$ be a supersingular $K3$ surface over a finite field $k$ of characteristic at least $5$. Let $S^{[2]}$ be the Hilbert scheme that parametrizes length $2$ zero-dimensional subschemes of $S$. By \cite{Fo68}, $S^{[2]}$ is a smooth projective variety of dimension $4$. 

The variety $S^{[2]}$ is the quotient of the blow-up of $S\times S$ along the diagonal by the involution exchanging the two factors.  By \cite[Proposition 6]{Be83}, the second cohomology group of $S^{[2]}$ is generated by the second cohomology group of $S$ and the class $[E]$ of the exceptional divisor. As a consequence, $S$ satisfies the Tate conjecture if and only if $S^{[2]}$ satisfies the Tate conjecture for divisors.

\bigskip

By \cite{De81}, $S$ lifts to a projective $K3$ surface $\mathcal S$ over the ring $W$ of Witt vectors of $k$. The variety $S^{[2]}$ lifts to the relative Hilbert scheme $\mathcal S^{[2]}$. Let $K$ be the fraction field of $W$, and fix an embedding of $K$ into $\C$. Let $q$ be the Beauville-Bogomolov form on $H^2(\mathcal S_{\C}^{[2]}, \Z)$, and let $h$ be an ample cohomology class. Let $d=q(h)$.

For any large enough integer $N$, $Nh-[E]$ is an ample cohomology class. Since $q(E)=-2$, we have
$$q(Nh+E)=N^2d-2Nq(h,E)-2.$$
If $p$ divides $N$, then the variety $\mathcal S_{\C}^{[2]}$ with the polarization given by $Nh-[E]$ satisfies the hypothesis of Theorem \ref{weak}. This shows that $S$ satisfies the Tate conjecture.
\end{proof}

\noindent\textbf{Remark.} The idea of finding prime-to-$p$ polarizations on $S^{[2]}$ in order to study the Tate conjecture on $S$ is somewhat reminiscent of Zarhin's trick of finding a principal polarization on $(A\times\hat{A})^4$, where $A$ is an abelian variety, see \cite{Zar74}.

\noindent\textbf{Remark.} Note that the proof of Theorem \ref{main} only requires the special case of Theorem \ref{weak} in the supersingular case, that is, when the Galois action on the second cohomology group of the reduction of $X$ at $p$ is trivial.

\subsection{The universal deformation space and reduction of Theorem \ref{weak} to the supersingular case}\label{reduced}

Let us keep the notations of Theorem \ref{weak}. By the theorem of Deligne and Illusie in \cite{DI87}, the Hodge to de Rham spectral sequence of $X$ degenerates. By upper semicontinuity of cohomology groups, this implies that the Hodge numbers of $X$ and $Y$ are the same. Using the universal coefficients theorem, it is easy to check that the crystalline cohomology groups of $X$ are torsion free.

The versal formal deformation space of $X$ is smooth over the ring of Witt vectors $W$. Indeed, by the Bogomolov-Tian-Todorov theorem \cite{Bo78, Ti86, To89}, the versal deformation space of $Y$, that is, in characteristic zero, is smooth of dimension the dimension of $H^1(Y, T_Y)$. It follows that the versal deformation space of $X$ over $W$ has relative dimension at least the dimension of $H^1(Y, T_Y)\simeq H^1(Y, \Omega^1_Y)$, which is equal to the dimension of $H^1(X, \Omega^1_X)\simeq H^1(X, T_X)$ since the Hodge to de Rham spectral sequence degenerates at $E_1$. This implies that the versal formal deformation space of $X$ is smooth over $W$

As a consequence of these results, the deformation theory of $X$ is very similar to the deformation theory of $K3$ surfaces. In a more precise way, the second crystalline cohomology group of $X$ is a $K3$ crystal as in \cite{Og79}. The results of \cite{Og79}, paragraphs 1 and 2 on the versal deformation space of polarized $K3$ surfaces, as well as the results of \cite{Og11} hold without any change for the deformation of $X$. We will freely refer to these results.

\begin{df}
Let $X$ be as in Theorem \ref{weak}. We say that $X$ is supersingular if the Frobenius morphism acts on the second \'etale cohomology group of $X$ through a finite group. Otherwise, we say that $X$ has finite height.
\end{df}

There again, the general results of \cite{Ar74} apply and show that $X$ is supersingular if and only if the formal Brauer group of $X$ has finite height. These remarks show that the proof of \cite{NO85} gives without any change the following theorem.

\begin{thm}(Nygaard-Ogus, \cite{NO85}). Let $X$ be as in Theorem \ref{weak}. If $X$ has finite height, then $X$ satisfies the Tate conjecture.
\end{thm}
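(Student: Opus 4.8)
Since we only ask for the Tate conjecture for divisors, the plan is to reduce everything to the Lefschetz $(1,1)$ theorem in characteristic zero: a rational cohomology class of type $(1,1)$ on a smooth projective complex variety is the class of a divisor. Thus it suffices to show that every Frobenius-invariant class $\alpha\in H^2_{\mathrm{et}}(X_{\overline k},\Q_\ell(1))$ can be realized, after lifting $X$ to characteristic zero, by a class of Hodge type on the generic fibre. Concretely, I would try to produce a lift $\mathcal X$ of $X$ over a finite unramified extension of $W$, together with a lift $\tilde\alpha$ of $\alpha$, such that the de Rham incarnation of $\tilde\alpha$ lies in the step $F^1$ of the Hodge filtration of $H^2_{\mathrm{dR}}(\mathcal X)$; for a rational class in $H^2$ this forces it into $H^{1,1}$, so that $\tilde\alpha$ becomes a Hodge class. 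Once this is done, $\tilde\alpha$ is algebraic over $\C$ by Lefschetz $(1,1)$, and specializing the resulting divisor back to $X$ shows that $\alpha$ is algebraic.

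The existence of such a filtered lift is where the finite-height hypothesis is decisive, and where the structure of $H^2_{\mathrm{cris}}(X/W)$ as a K3 crystal is used. If $X$ has finite height $h$, the $F$-crystal $H^2_{\mathrm{cris}}(X/W)$ has Newton slopes $1-\tfrac1h$, $1$ and $1+\tfrac1h$, with the slope-one part of rank $b_2-2h$; after the Tate twist the Frobenius-invariant classes live in this slope-one part. Because the deformation theory of $X$ is governed, exactly as for K3 surfaces, by a period map into Ogus's space of characteristic subspaces of the K3 crystal (see \cite{Og79, Og11}), the condition that a fixed crystalline class remain in $F^1$ cuts out a smooth formal subscheme of the versal deformation space of $X$ over $W$, of the expected codimension one. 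This is the mixed-characteristic analogue of a Noether--Lefschetz locus attached to $\alpha$.

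The heart of the matter is therefore to show that this Noether--Lefschetz locus is nonempty and contains a lift of $X$ defined over a finite extension of $W$. This is precisely the content of Ogus's crystalline Torelli theory in the finite-height case: the slope decomposition provides enough rigidity to canonically lift the K3 crystal together with its slope-one part to a filtered module over $W$, and this lift is realized geometrically. To promote the resulting de Rham class to an honest rational Hodge class over $\C$ — which is not automatic, since a priori $\alpha$ only gives a $\Q_\ell$-class — I would pass through the Kuga--Satake correspondence, transporting $\tilde\alpha$ to an endomorphism-type class on the associated abelian variety, where algebraicity is governed by the much better understood case of abelian varieties over finite fields. Matching crystalline realizations on the two sides is legitimate precisely because, up to Tate twist, the slope-one part of $H^2_{\mathrm{cris}}$ is a unit-root $F$-crystal.

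The main obstacle, as this discussion makes clear, is the control of the Hodge filtration on the lift: one must guarantee not merely that $\alpha$ deforms to a de Rham class but that it deforms to one of type $(1,1)$, i.e.\ that the mixed-characteristic Noether--Lefschetz locus meets a lift of $X$. This rigidity is supplied by the finite-height slope structure and collapses entirely in the supersingular case, where the slope-one part fills all of $H^2_{\mathrm{cris}}$ and the characteristic subspaces degenerate — which is exactly why the supersingular case demands the independent argument developed in the rest of the paper. Since the K3-crystal formalism and the deformation-theoretic inputs of \cite{Og79, Og11} apply to $X$ verbatim, the argument of \cite{NO85} carries over without change.
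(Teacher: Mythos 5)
Your proposal is correct and takes essentially the same route as the paper: the paper's entire ``proof'' of this theorem is the observation, made in section \ref{reduced}, that the hypotheses of Theorem \ref{weak} force $H^2_{crys}(X/W)$ to be a torsion-free K3 crystal whose deformation theory matches that of polarized $K3$ surfaces, so that the argument of \cite{NO85} applies verbatim, which is exactly your concluding point. Your fleshed-out summary of that argument (lifts forcing Tate classes into $F^1$, Lefschetz $(1,1)$, Kuga--Satake transport to abelian varieties where Tate's theorem applies) is accurate, up to the minor slip that the relevant input is the Nygaard--Ogus theory of (quasi-)canonical lifts for finite-height K3 crystals rather than Ogus's crystalline Torelli theorem, which concerns the supersingular case.
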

The supersingular case is thus the only remaining case of Theorem \ref{weak}. The proof of this case will be logically independent of the work of Nygaard-Ogus.

\section{Deformation theory and the Kuga-Satake morphism}\label{preliminary}

From now on, and through the remainder of this article, we will fix the following notations. Let $k$ be the algebraic closure of a finite field of characteristic $p\geq 5$. Let $W$ be the ring of Witt vectors of $k$, and let $K$ be the fraction field of $W$. By abuse of notation, we will again denote by $X$ the base change over $k$ of a variety satisfying the hypotheses of Theorem \ref{weak}.

We assume that $X$ is supersingular. Let $b=b_2(X)>3$ be the second Betti number of $X$ (which is equal to the second Betti number of $Y$ by the smooth base change theorem). We will show that the N\'eron-Severi group of $X$ has rank $b$. 

\bigskip

In this section, we gather results and notations around the deformation space of $X$, the Kuga-Satake mapping in that setting, and the Noether-Lefschetz locus. While some of these results are quite similar to those in \cite{Mau12}, and some are taken directly from there, we state them in our context and sometimes give different proofs and constructions. 

\subsection{Deformation spaces}\label{moduli}

Let $\widehat{\mathcal X}\ra \widehat S$ be the formal versal deformation space of $X$ over $W$. We showed in section \ref{reduced} that the assumptions on $X$ ensure that $\widehat S$ is formally smooth of relative dimension $b-2$ and that the deformation is universal. 

Let $L$ be the ample line bundle on $X$ induced by the ample line bundle on $Y$ with cohomology class $h$. Since $p$ is prime to $h^{2n}$, the class $c_1(L)^n$ is nonzero in $H^{2n}_{dR}(X/k)$, which in turn implies that $c_1(L)$ doesn't lie in $F^2H^2_{dR}(X/k)$, where $F^{\bullet}$ is the Hodge filtration on de Rham cohomology. 

Let $\widehat T$ be the universal deformation space of the pair $(X, L)$. By \cite[Proposition 2.3]{Og79}, $\widehat T$ is formally smooth of relative dimension $b-3$ over $W$. We also denote by $\widehat{\mathcal X}\ra \widehat T$ the universal formal deformation of the polarized variety $(X, L)$.

\bigskip

By Artin's algebraization theorem, we can find a smooth scheme $T$ of finite type over $W$, and a smooth projective morphism $\pi : \mathcal X\ra T$, together with a relatively ample line bundle $\mathcal L$ on $\mathcal X$ that extends the universal formal deformation of the pair $(X, L)$ over $\widehat T$. After shrinking $T$, we can assume that $\pi$ is a universal deformation at every point.

The Beauville-Bogomolov form on $Y$ is a rational multiple of the usual intersection form on the primitive cohomology lattice by \cite[p.775, Remarque 2]{Be83}. As a consequence, it descends to a quadratic form on the relative primitive cohomology over $T$, for the \'etale as well as for the crystalline theories. We will denote this extension by $q$ as well.

\subsection{Spin level structures}\label{spin}

We briefly recall basic definitions about spin level structures, see \cite{An96, Ri06, Mau12} to which we refer for further details. Let $n\geq 3$ be an integer, and assume $n$ is prime to $p$. 

Let $L$ be an abstract lattice that is isomorphic to the primitive cohomology lattice of $Y$ endowed with the Beauville-Bogomolov quadratic form. Let 
$$CSpin(L)=CSpin(L)(\mathbb A_f)\cap Cl_+(L\otimes\widehat{\mathbb Z}),$$
where $Cl_+(L)$ is the even part of the Clifford algebra of $L$.

Let $\mathbb K_n^{sp}$ be the subgroup of $CSpin(L)$ consisting of elements congruent to $1$ modulo $n$, and let $\mathbb K_n^{ad}$ be its image in $SO(L\otimes \widehat{\mathbb Z})$. If $\ell$ is a prime number, let $\mathbb K_{n, \ell}^{ad}$ be the $\ell$-adic part of $\mathbb K_n^{ad}$. By assumption, $q$ is a non-degenerate quadratic form on $L\otimes\Z/p\Z$ and by \cite[4.3]{An96}, $\mathbb K_{n, p}$ is the whole special orthogonal group.

\bigskip

We say that $\pi : \mathcal X\ra T$ admits a spin level $n$ structure if the image of the algebraic fundamental group of $T$ acting on primitive cohomology with $\ell$-adic coefficients lands into $\mathbb K_n^{ad}$ for any $\ell$ such that $\mathbb K_{n, \ell}^{ad}$ is a proper subgroup of the special orthogonal group, choosing a base point corresponding to the complex lift $Y$ of $X$.

After replacing $T$ by an \'etale cover, we can and will assume that $\pi : \mathcal X\ra T$ admits a spin level $n$ structure.

\subsection{The Kuga-Satake construction}\label{KS}

The Kuga-Satake mapping plays a major role in this paper. We refer to \cite{Mau12}, as well as to the papers of Andr\'e and Rizov \cite{An96, Ri10}, for most definitions and results. However, for reasons that will become clear below, we need to work with a slightly different definition as follows.

\bigskip

Let $V=H^2(Y, \Q)_{prim}$ be the primitive part of the second Betti cohomology group of the holomorphic symplectic variety $Y$, and let $C$ be the Clifford algebra of $V$. The classical Kuga-Satake construction, see \cite{De72} or \cite{CS11} for details, endows $C$ with a polarized Hodge structure of weight $1$. As a consequence, there exists a polarized abelian variety $A$ with $H^1(A, \Q)\simeq V$ as polarized Hodge structures. The integer lattice in $V$ determines $A$ uniquely. 

Let $g$ be the dimension of $A$ and $d'^2$ be the degree of the polarization. An explicit computation shows that $d'$ is prime to $p$, see \cite[5.1]{Mau12}. The polarized abelian variety $A$ is the \emph{Kuga-Satake variety} of $Y$.

Elements of $V$ act on $C$ by multiplication on the left. This induces a canonical primitive embedding of polarized Hodge structures
\begin{equation}\label{sub}
H^2(Y, \Z)_{prim}\hookrightarrow End(H^1(A, \Z)).
\end{equation}
Note that this canonical embedding only exists if we define the Kuga-Satake variety using the full Clifford algebra $C$ and not only its even part $C^+$ as in the references above. This is the reason we make this slight change in definition.

\subsection{The Kuga-Satake mapping}

We now proceed to the construction of a Kuga-Satake mapping over the deformation space $T$. First, let $\pi_K : \mathcal X_K\ra T_K$ be the generic fiber of $\pi$. Let $\mathcal A_{g, d', n}$ be the moduli space of abelian varieties of dimension $g$ with a polarization of degree $d'^2$ and a level $n$ structure over $W$, and let $\mathcal A_{g, d', n, K}$ be its generic fiber.

The following result is proved in \cite[Theorem 8.4.3]{An96}, see also \cite{Ri10} for the case of $K3$ surfaces. It follows from the fact that $\pi : \mathcal X\ra T$ admits a spin level $n$ structure. 

\begin{prop}
There exists a morphism
$$\kappa_K : T_K\ra \mathcal A_{g, d', n, K}$$
which for a complex point $t$ sends the variety $\mathcal X_t$ to its Kuga-Satake variety.

Given any prime number $\ell$, there is a canonical primitive embedding of $\ell$-adic sheaves on $T_K$
\begin{equation}\label{primitive}
R^2_{et}\pi_*\Z_{\ell}(1)_{prim}\hookrightarrow \mathrm{End}(R^1_{et}\psi_* \Z_{\ell}),
\end{equation}
where $R^2_{et}\pi_*\Z_{\ell}(1)_{prim}$ is the relative primitive cohomology of $\pi$ and $\psi : \mathcal A_K\ra T_K$ is the abelian scheme over $T$ induced by $\kappa_K$.
\end{prop}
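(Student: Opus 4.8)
The plan is to construct $\kappa_K$ by invoking the relevant moduli interpretation of $\mathcal{A}_{g,d',n,K}$ together with the period-domain description of $T_K$, exactly as in \cite{An96}. First I would observe that over $\mathbb{C}$, after choosing the base point corresponding to $Y$, the analytic universal family gives a period map from the universal cover of $T_{\mathbb{C}}^{an}$ to the Hermitian symmetric domain $\Omega$ associated with the orthogonal group $SO(2,b-3)$; the Kuga-Satake construction of \S\ref{KS} lifts this to a map into the Siegel upper half-space by sending a period point to the Hodge structure of weight $1$ it induces on the Clifford algebra $C$. The key point is that the spin level $n$ structure of \S\ref{spin}, combined with the monodromy constraint imposed after passing to the \'etale cover, guarantees that the action of $\pi_1(T_{\mathbb{C}})$ is compatible with the level structures, so that this map descends to an honest morphism of complex analytic spaces $\kappa_{\mathbb{C}} : T_{\mathbb{C}} \to \mathcal{A}_{g,d',n,\mathbb{C}}$. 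By the theorem of Baily-Borel together with GAGA, $\kappa_{\mathbb{C}}$ is algebraic.

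Next I would descend from $\mathbb{C}$ to $K$. Since $T_K$ and $\mathcal{A}_{g,d',n,K}$ are both of finite type over $K$, and the morphism $\kappa_{\mathbb{C}}$ is defined over the algebraic closure, one descends the morphism to a finite extension using that the Kuga-Satake abelian variety, its polarization, and its level structure are all canonically attached to the Hodge-theoretic data and hence are Galois-equivariant; the spin level structure is precisely what rigidifies the construction enough to kill the automorphisms and force descent to $K$ itself. This is the content of \cite[Theorem 8.4.3]{An96}, which I would cite directly rather than reprove. At this stage $\kappa_K$ exists and has the asserted moduli-theoretic meaning on complex points.

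The second half of the statement, the embedding \eqref{primitive} of $\ell$-adic sheaves, I would deduce by comparing the topological embedding \eqref{sub} with \'etale realizations. Over $\mathbb{C}$, the inclusion $H^2(Y,\mathbb{Z})_{prim} \hookrightarrow \operatorname{End}(H^1(A,\mathbb{Z}))$ of \eqref{sub} is an embedding of variations of Hodge structure over $T_{\mathbb{C}}$, where the right side is the local system $\operatorname{End}(R^1\psi_{\mathbb{C},*}\mathbb{Z})$ pulled back along $\kappa_{\mathbb{C}}$. Applying the comparison isomorphism between Betti and $\ell$-adic \'etale cohomology, and using that $\kappa_K$ pulls the universal abelian scheme on $\mathcal{A}_{g,d',n,K}$ back to $\psi : \mathcal{A}_K \to T_K$, this topological embedding realizes $\ell$-adically as a map of lisse $\mathbb{Z}_\ell$-sheaves on $T_{K,\mathbb{C}}$, hence on $T_{\overline{K}}$ by base change, hence on $T_K$ by the same Galois-equivariance used to descend $\kappa_K$. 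Primitivity of the embedding, meaning that the quotient is torsion-free, follows from primitivity in \eqref{sub}, which holds because the action of $V$ on $C$ by left multiplication realizes $V$ as a direct summand over $\mathbb{Z}$ of $\operatorname{End}(C)$ compatibly with the integral structures.

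I expect the main obstacle to be the descent and the integrality of \eqref{primitive} rather than the analytic construction, which is classical. Specifically, the delicate points are that the level $n$ structure genuinely suffices to descend $\kappa_{\mathbb{C}}$ all the way to $K$ (not merely to $\overline{K}$), and that the embedding of $\ell$-adic sheaves can be realized with $\mathbb{Z}_\ell$-coefficients as a \emph{primitive} sublattice rather than merely after tensoring with $\mathbb{Q}_\ell$. The first is handled by Andr\'e's rigidity argument via spin level structures, and the second reduces to the fact, emphasized in \S\ref{KS}, that one must use the full Clifford algebra $C$ rather than its even part $C^+$ precisely so that the canonical integral embedding \eqref{sub} exists; without this the embedding would only be available rationally. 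I would therefore organize the proof so that the integral statement \eqref{sub} over $\mathbb{C}$ is established first, and everything else follows by comparison and descent.
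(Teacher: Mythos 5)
Your proposal is correct and takes essentially the same route as the paper, whose entire proof is to invoke Andr\'e's Theorem 8.4.3 (applicable because $\pi : \mathcal X \ra T$ admits a spin level $n$ structure) and to rely on the full Clifford algebra $C$ rather than $C^+$ for the integral primitive embedding, exactly as you do. The only point the paper records that you omit is the remark that Andr\'e states his result for the usual intersection pairing on primitive cohomology, so one must observe that the Beauville--Bogomolov form is proportional to it for the statement to transfer.
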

\noindent\textbf{Remark.} The result of Andr\'e is actually stated for the usual intersection pairing on primitive cohomology. Since the Beauville-Bogomolov form $q$ is proportional to this intersection pairing, the same result holds using $q$.

We can now use Proposition 6.1.2 of \cite{Ri10} to conclude that the Kuga-Satake mapping extends to $T$ and get the following. 

\begin{prop}
The Kuga-Satake mapping $\kappa_K$ extends uniquely to a morphism 
$$\kappa : T\ra \mathcal A_{g, d', n}.$$
\end{prop}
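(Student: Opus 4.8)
The plan is to reduce the statement to an extension property for abelian schemes and then verify the good-reduction hypothesis using that $\pi$ is smooth and proper over \emph{all} of $T$. Since $n\geq 3$ is prime to $p$, the level-$n$ structure rigidifies the moduli problem, so $\mathcal A_{g,d',n}$ is a fine moduli scheme, quasi-projective and separated over $W$. Giving a morphism $\kappa : T\ra \mathcal A_{g,d',n}$ extending $\kappa_K$ is therefore equivalent to extending the Kuga-Satake abelian scheme $\psi : \mathcal A_K\ra T_K$ --- together with its degree-$d'^2$ polarization and its level-$n$ structure --- to a polarized abelian scheme over the whole of $T$. \emph{Uniqueness} of such an extension is immediate: $T$ is smooth, hence reduced, over $W$, its generic fiber $T_K$ is dense in $T$, and $\mathcal A_{g,d',n}$ is separated over $W$, so two morphisms agreeing on the dense open $T_K$ must coincide. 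The content of the statement is thus \emph{existence}.

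For existence, the key geometric input is that the relevant monodromy is unramified across the special fiber. Fix a prime $\ell\neq p$. Over $T_K$ the embedding \eqref{primitive} realizes $R^2_{et}\pi_*\Z_{\ell}(1)_{prim}$ as a sub-local system of $\mathrm{End}(R^1_{et}\psi_*\Z_{\ell})$, and the Kuga-Satake construction builds $R^1_{et}\psi_*\Z_{\ell}$ out of the Clifford algebra of $R^2_{et}\pi_*\Z_{\ell}(1)_{prim}$, so that the monodromy of the abelian family is governed, via the $CSpin$-representation, by the monodromy of $R^2_{et}\pi_*\Z_{\ell}$. Now $\pi : \mathcal X\ra T$ is smooth and proper over all of $T$, not merely over $T_K$; by the smooth and proper base change theorem $R^2_{et}\pi_*\Z_{\ell}$ is a lisse sheaf on the whole of $T$. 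In particular the inertia at every point of the special fiber $T_k$ acts trivially on $R^2_{et}\pi_*\Z_{\ell}$, so the Tate module $R^1_{et}\psi_*\Z_{\ell}$ of $\mathcal A_K$ is unramified along the smooth divisor $T_k\subset T$.

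This is exactly the hypothesis needed to invoke the extension criterion for abelian schemes of \cite[Proposition 6.1.2]{Ri10}. Since $T$ is regular (being smooth over $W$) and $T_k$ is a smooth divisor, unramifiedness of the Tate module forces $\psi : \mathcal A_K\ra T_K$ to have good reduction along $T_k$ and hence to extend to an abelian scheme $\psi : \mathcal A\ra T$. The polarization $\lambda_K : \mathcal A_K\ra \mathcal A_K^{\vee}$ extends uniquely to a homomorphism over the normal connected base $T$, remains a polarization by specialization, and keeps degree $d'^2$ by flatness; the level-$n$ structure extends because it is a section of the now-lisse sheaf $R^1_{et}\psi_*\Z/n\Z$. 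Feeding the resulting polarized abelian scheme with level structure into the moduli interpretation of $\mathcal A_{g,d',n}$ produces the desired morphism $\kappa : T\ra \mathcal A_{g,d',n}$ restricting to $\kappa_K$.

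\emph{The main obstacle} is the passage from unramifiedness of the $\ell$-adic monodromy to an honest abelian scheme over all of $T$, rather than merely a N\'eron model over the generic points of $T_k$: this requires the base to be regular and the boundary to be a smooth divisor, and uses a purity argument to fill in across codimension $\geq 2$. A further genuine subtlety, also packaged in the cited extension result, is that the Kuga-Satake monodromy takes values in $CSpin$, a central extension of $SO$, so a priori inertia could act through the central similitude factor even while acting trivially on $R^2_{et}\pi_*\Z_{\ell}(1)_{prim}$; controlling this spinor-similitude character, and thereby deducing genuine good reduction for $\mathcal A$, is the technical heart of \cite[Proposition 6.1.2]{Ri10}. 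Once that input is granted, the essential work reduces to the observation of the second paragraph, namely that $R^2_{et}\pi_*\Z_{\ell}$ is lisse over all of $T$ because $\pi$ is smooth and proper over $T$.
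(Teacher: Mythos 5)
Your argument is correct and follows essentially the same route as the paper, which simply invokes \cite[Proposition 6.1.2]{Ri10} to extend $\kappa_K$ across the special fiber. You have merely unpacked why that criterion applies --- lisseness of $R^2_{et}\pi_*\Z_{\ell}$ over all of $T$ because $\pi$ is smooth and proper, the resulting unramifiedness of the Kuga-Satake Tate module via the $CSpin$ representation, and uniqueness from separatedness of the fine moduli space --- which is a faithful elaboration of the cited result rather than a different proof.
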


\subsection{Quasi-finiteness of the Kuga-Satake mapping}

The following result is due to Maulik in the case $X$ is a $K3$ surface.

\begin{prop}[\cite{Mau12}, Proposition 5.10]\label{qf}
The Kuga-Satake map $\kappa  : T \ra \mathcal A_{g, d', n}$ is quasifinite.
\end{prop}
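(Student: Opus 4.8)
The plan is to prove quasifiniteness by showing that $\kappa$ is quasifinite on the generic fiber $T_K$ and on the supersingular special fiber separately, since these are the only fibers that concern us; the statement is really about the behavior of $\kappa$ over $W$, so it suffices to check that no positive-dimensional subvariety of a fiber of $T$ is contracted by $\kappa$. The key structural input is the embedding of local systems in \eqref{primitive}, which says that the primitive second cohomology of the family $\mathcal X \to T$ is recovered from the first cohomology of the abelian scheme $\mathcal A \to T$ pulled back along $\kappa$. I would exploit this together with the fact, recorded in section \ref{moduli}, that $T$ is a \emph{universal} deformation space at every point, so that the relative primitive cohomology, together with its Hodge (respectively crystalline) filtration, is a genuine invariant that separates nearby fibers.

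First I would treat the generic fiber. Over $T_K$ the map $\kappa_K$ lands in the Shimura variety $\mathcal A_{g,d',n,K}$, and the variation of Hodge structure on $R^2_{et}\pi_*\Z_\ell(1)_{prim}$ is, via \eqref{primitive}, a sub-variation of the one carried by the universal abelian scheme. Because $T$ is a universal deformation at every point, the period map for the primitive $H^2$ is an immersion: the derivative of the period map, i.e. the Kodaira-Spencer map composed with cup product into $\mathrm{Hom}(F^1, H^2/F^1)$, is injective by the local Torelli property for these $K3$-type Hodge structures. Since the Kuga-Satake period map factors the $H^2$-period map through \eqref{primitive}, a tangent vector killed by $d\kappa_K$ would be killed by the $H^2$-period map, forcing it to vanish. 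Hence $\kappa_K$ is unramified, and in particular quasifinite.

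The heart of the argument, and the step I expect to be the main obstacle, is the supersingular special fiber, where there is no Hodge theory available and one must argue crystalline-theoretically. Here I would replace the complex period map by the crystalline period/Kodaira-Spencer map for the $K3$ crystal $H^2_{cris}(X/W)$ described in section \ref{reduced}, using that $\widehat T$ is the universal deformation of $(X,L)$ and that the results of \cite{Og79} on the period map for $K3$ crystals apply verbatim in our setting. The relative first crystalline cohomology of $\mathcal A$ carries the primitive $H^2$-crystal as a sub-$F$-crystal via the crystalline analog of \eqref{primitive}, so a positive-dimensional fiber of $\kappa$ in the supersingular locus would yield a positive-dimensional family of deformations of $X$ inducing the \emph{constant} variation on the $H^2$-crystal with its Hodge filtration. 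The injectivity of the crystalline Kodaira-Spencer map for the universal deformation (Ogus's local Torelli for $K3$ crystals, which holds here since the crystal is torsion-free and the deformation is universal of the expected dimension $b-3$) then forces such a family to be trivial, giving the contradiction. The delicate point is to ensure that the embedding \eqref{primitive} really does extend to an embedding of $F$-crystals over the supersingular fiber with the filtration respected, so that constancy of the abelian-variety side genuinely forces constancy of the $H^2$-side; this is where one invokes the full strength of the Kuga-Satake construction over $T$ rather than just over $T_K$, and it is the content one borrows from \cite{Mau12} and adapts to the holomorphic-symplectic setting.
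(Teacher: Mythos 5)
Your proposal follows essentially the same route as the paper: the paper's proof consists precisely of establishing the crystalline analog of (\ref{primitive}) as a \emph{strict} embedding of filtered Frobenius crystals (Proposition \ref{crystalline}) and then deducing quasifiniteness as in \cite[6.4]{Mau12}, which is exactly the local-Torelli-plus-constancy argument you describe on the generic and special fibers. Two small points of precision: ``the filtration is respected'' must be strengthened to \emph{strictness} --- the Hodge filtration on $R^2\pi_*\Omega^{\bullet}_{\widehat{\mathcal X}/\widehat B}(1)_{prim}$ is \emph{induced} by that on $\mathrm{End}(R^1\psi_*\Omega^{\bullet}_{\widehat{\mathcal A}/\widehat B})$ --- since a merely filtered map would not let constancy of the abelian side force constancy of the $H^2$-filtration; and quasifiniteness must be checked on the whole special fiber $T_k$ rather than only on the supersingular locus, but your crystalline Kodaira--Spencer argument nowhere uses supersingularity, so it applies verbatim to all of $T_k$.
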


The proof of Maulik can easily be adapted to our setting. For the sake of completeness, let us however sketch a slightly more direct proof. We start with the following analog of equation (\ref{primitive}). We use the language of filtered Frobenius crystals as in \cite[Definition 6.3]{Mau12}.

\begin{prop}\label{crystalline}
Let $b$ be a $k$-point of $T$, and let $\widehat B$ be the formal neighborhood of $b$ in $T$. Denote by $\psi : \mathcal A\ra T$ the Kuga-Satake abelian scheme associated to $\mathcal X\ra T$. There is a canonical strict embedding of filtered Frobenius crystals on $\widehat B$
\begin{equation}\label{subcrys}
R^2\pi_*\Omega^{\bullet}_{\widehat{\mathcal X}/\widehat B}(1)_{prim} \hookrightarrow \mathrm{End}(R^1\psi_*\Omega^{\bullet}_{\widehat{\mathcal A}/\widehat B}).
\end{equation}
This morphism is compatible with Equation (\ref{primitive}) via the comparison theorems.
\end{prop}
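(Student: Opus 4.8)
The plan is to construct the embedding (\ref{subcrys}) by transporting the étale embedding (\ref{primitive}) into the crystalline world via the comparison isomorphisms, and then to check that the resulting map is a morphism of filtered Frobenius crystals that is strict. The starting point is that over the formal neighborhood $\widehat B$ of $b$, the relative de Rham cohomology $R^2\pi_*\Omega^{\bullet}_{\widehat{\mathcal X}/\widehat B}(1)_{prim}$ and $\mathrm{End}(R^1\psi_*\Omega^{\bullet}_{\widehat{\mathcal A}/\widehat B})$ both carry natural structures of filtered Frobenius crystals: the Gauss-Manin connection provides the crystal structure, the Hodge filtration provides the filtration, and crystalline Frobenius provides the $F$-structure, the latter available because the Hodge-to-de Rham spectral sequences degenerate (which holds for $\mathcal X$ by the degeneration hypothesis recalled in section \ref{reduced}, and for the abelian scheme $\mathcal A$ automatically). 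I would emphasize that the crystalline cohomology of $X$ is torsion-free, as noted in section \ref{reduced}, so no denominators obstruct the integral comparison.

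First I would work on the generic fiber. Over $T_K$, the embedding (\ref{primitive}) is an embedding of $\ell$-adic local systems underlying variations of Hodge structure, and the Kuga-Satake construction of section \ref{KS} shows it is in fact an embedding of polarized Hodge structures compatible with the Beauville-Bogomolov form. The de Rham realization of this embedding, via the Betti-de Rham comparison, gives a horizontal embedding of filtered vector bundles with connection on $T_K$, respecting the Hodge filtrations; strictness over $T_K$ is exactly the statement that (\ref{sub}) is a morphism of Hodge structures, which is automatic since both sides are pure of the relevant weights and the map respects the Hodge filtration and its conjugate. The content of the proposition is then to extend this across the special fiber and to identify the extension with the crystalline realization.

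Second, and this is where I expect the main difficulty, I would pass to mixed and positive characteristic. The key tool is the comparison between de Rham cohomology over $\widehat B$ and crystalline cohomology, together with the compatibility of the Kuga-Satake abelian scheme $\psi : \mathcal A \ra T$ with crystalline Frobenius. Concretely, I would argue that the embedding (\ref{sub}), being induced by the action of $V$ on its Clifford algebra by left multiplication, is defined by an idempotent or by an explicit tensor that is itself a morphism of filtered Frobenius crystals; since the Clifford multiplication is compatible with the spin structure and hence with the Galois and crystalline Frobenius actions on both sides, the embedding commutes with $F$. The hard part will be checking \emph{strictness} of the embedding with respect to the Hodge filtration on the special fiber, since strictness can fail under reduction modulo $p$ if the filtrations jump. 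Here I would exploit that the filtration on $R^2\pi_*\Omega^{\bullet}_{\widehat{\mathcal X}/\widehat B}(1)_{prim}$ is a sub-filtration realized inside $\mathrm{End}(R^1\psi_*\Omega^{\bullet}_{\widehat{\mathcal A}/\widehat B})$ via the Clifford action, and that the degeneration of the Hodge-to-de Rham spectral sequence (guaranteed by $p > 2n$ through \cite{DI87}) keeps the filtered pieces locally free of the expected ranks, so that the filtration induced from the target coincides with the intrinsic Hodge filtration on the source. Finally, the compatibility with (\ref{primitive}) via the comparison theorems is built into the construction, since I obtain (\ref{subcrys}) precisely as the de Rham/crystalline avatar of the same Clifford-multiplication map that yields (\ref{primitive}) étale-locally, and the comparison isomorphisms are functorial for such tensor operations.
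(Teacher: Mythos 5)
Your overall strategy --- transporting the \'etale embedding (\ref{primitive}) into the crystalline setting via the comparison theorems and then verifying integrality and strictness --- is the same as the paper's, which constructs the morphism at the level of isocrystals using the comparison theorems of Andreatta--Iovita \cite{AI12} and then appeals to Fontaine--Messing theory \cite{FM87}. However, your justifications at the two genuinely delicate points do not hold up, and these are exactly the points the proposition is about.

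First, integrality. You claim that since the crystalline cohomology of $X$ is torsion-free, ``no denominators obstruct the integral comparison.'' This is not a valid inference: a map between the rationalizations of two torsion-free $W$-lattices need not carry one lattice into the other, and the $p$-adic comparison isomorphisms are a priori only isomorphisms of isocrystals. Showing that the crystalline realization of the Kuga--Satake embedding respects the integral structures is precisely the hard content here; the paper handles it by invoking the integral theory of Fontaine--Messing, and this difficulty is the reason \cite{Mau12} needed separate Morita-type arguments that the present paper is at pains to avoid. Second, strictness. Your argument is that degeneration of Hodge--de Rham keeps the filtered pieces locally free of the expected ranks, so the filtration induced from the target must coincide with the intrinsic one. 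But the rank of the intersection of the image with the filtration on the target can jump upward on the special fiber relative to the generic fiber, so knowing that the intrinsic $F^i$ has the correct rank and is contained in the induced filtration does not force equality. The paper instead observes that strictness can be checked fiberwise and is a general feature of Fontaine--Messing theory, citing \cite[Proposition 3.1.1.1]{BM02}. Both gaps are repairable by citing the appropriate integral $p$-adic Hodge theory, but as written the proposal does not supply the needed inputs.
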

\noindent\textbf{Remark.} Saying that the morphism above is strict means that the filtration on the left side is induced by the filtration on the right side.

\begin{proof}
Aside from the strictness property, this is proven in \cite[Section 6]{Mau12}. First, one argues that the morphism exists at the level of isocrystals by the comparison theorems of Andreatta-Iovita in \cite{AI12}. To check that the morphism is integral, one uses the theory of Fontaine-Messing in \cite{FM87}. Note that these arguments are general and do not use any property of the Beauville-Bogomolov form contrary to the subtler Morita arguments of \cite{Mau12}.

The morphism (\ref{subcrys}) is primitive because (\ref{primitive}) is. Strictness of (\ref{subcrys}) can be checked at the fibers, and is a general property of the theory of Fontaine-Messing, see for instance \cite[Proposition 3.1.1.1]{BM02}.
\end{proof}

\begin{proof}[Proof of Proposition \ref{qf}]
This is an easy consequence of Proposition \ref{crystalline} as in \cite[6.4]{Mau12}. 
\end{proof}

\subsection{Period maps}\label{period}

One of the main point of this paper is that a large part of \cite{Mau12} can be carried at the level of period spaces. We briefly gather some results on period maps for families of holomorphic symplectic varieties. Andr\'e's paper \cite{An96} contains related results.

\bigskip

Let $V$ be  vector space over $\Q$ of dimension $b-1$, and let $\psi$ be a non-degenerate bilinear form on $V$ of signature $(2, b-3)$ on $V$. Let $G$ be the algebraic group $SO(V, \psi)$, and let $\Omega$ be the period domain, that is, 
$$\Omega=\{\omega\in\mathbb P(V_{\C}) , \psi(\omega, \omega)=0 \,\mathrm{and}\, \psi(\omega, \overline\omega)>0\}.$$

To any $\omega\in\Omega$, we can associate a polarized Hodge structure of weight $2$ on $V$ such that $F^2V_{\C}=\C\omega$.As a consequence, the period domain can be naturally identified with a conjugacy class of morphisms $h : \mathbb S\ra G_{\R}$, where $\mathbb S$ is the Deligne torus. The pair $(G, \Omega)$ is a Shimura datum with reflex field $\Q$.

Now let $(L, \psi)$ be the lattice of section \ref{spin}, that is, a lattice isomorphic to the primitive lattice of $Y$. We consider the Shimura datum above associated to $V=L\otimes \Q$.

\bigskip

Let $n\geq 3$ be as before, and let $S_{n}$ be the Shimura variety defined over $\Q$ such that
$$\mathcal S_{n, \C}=G(\Q) \backslash  \Omega\times G(\mathbb A_f)/\mathbb K_n^{ad}.$$

Fix an embedding of $K$ into $\C$. Since $\mathcal X\ra T_K$ admits a level $n$ spin structure, the classical period map takes the form of an \'etale morphism of quasi-projective varieties
$$j : T_{\C}\ra \mathcal S_{n, \C}.$$

The  local Torelli theorem for holomorphic symplectic varieties in \cite[Th\'eor\`eme 5]{Be83} implies the following result.

\begin{prop}
The map
$$j : T_{\C}\ra \mathcal S_{n, \C}$$
is \'etale.
\end{prop}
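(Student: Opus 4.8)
The plan is to show that $j$ is a local analytic isomorphism at every point, which suffices since both $T_{\C}$ and $\mathcal{S}_{n,\C}$ are smooth over $\C$ of the same dimension $b-3$: the source because $T$ is smooth over $W$ of relative dimension $b-3$ (section \ref{moduli}), and the target because $\Omega$ is the open subset of the smooth quadric $\{\psi(\omega,\omega)=0\}$ in $\PP(V_{\C})$, which has dimension $b-3$ as $\dim V=b-1$. Since $n\geq 3$, the group $\mathbb K_n^{ad}$ acts freely and properly discontinuously, so $\mathcal{S}_{n,\C}$ is, analytically locally, a quotient of $\Omega$; it is therefore enough to check that the differential of the local lift $T_{\C}\ra \Omega$ of the period map is an isomorphism at each point $t$.

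First I would identify this differential. Writing $F^2=H^{2,0}(\mathcal{X}_t)_{prim}=\C\omega\subset V_{\C}$, where $\omega=\sigma$ is the class of the holomorphic symplectic form, the tangent space to $T_{\C}$ at $t$ is the space of first-order deformations of the polarized pair, which Kodaira--Spencer identifies with $H^1(\mathcal{X}_t, T_{\mathcal{X}_t})$. Differentiating the relation $\psi(\omega,\omega)=0$ shows that the tangent space to $\Omega$ at $\omega$ is $\mathrm{Hom}\bigl(F^2, (F^2)^{\perp}/F^2\bigr)\simeq H^{1,1}(\mathcal{X}_t)_{prim}$, of dimension $b-3$. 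By Griffiths transversality the differential of the period map is the cup-product map $H^1(\mathcal{X}_t, T_{\mathcal{X}_t})\ra \mathrm{Hom}(H^{2,0}, H^{1,1})$ sending a Kodaira--Spencer class $\theta$ to $\sigma\mapsto \theta\cup\sigma$.

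The key step is then the local Torelli theorem. Contraction with the everywhere non-degenerate form $\sigma$ induces an isomorphism of sheaves $T_{\mathcal{X}_t}\simeq \Omega^1_{\mathcal{X}_t}$, hence an isomorphism $H^1(\mathcal{X}_t, T_{\mathcal{X}_t})\simeq H^1(\mathcal{X}_t, \Omega^1_{\mathcal{X}_t})=H^{1,1}(\mathcal{X}_t)$. Restricted to the primitive part, this is exactly the assertion that the cup-product differential above is an isomorphism, which is the content of \cite[Th\'eor\`eme 5]{Be83}. It follows that $dj_t$ is an isomorphism for every $t$, and therefore that $j$ is \'etale.

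I expect the main obstacle to be bookkeeping rather than substance: one must match the period map as defined here --- through the orthogonal Shimura datum $(G,\Omega)$ and the chosen spin level $n$ structure of section \ref{spin} --- with the classical Griffiths period map, and verify that restricting to the primitive lattice and working in the holomorphic symplectic setting (where the role of $\sigma\in H^{2,0}$ replaces that of the $K3$ period) preserves both the dimension count and the identification of the differential with the contraction map. Once this identification is in place, local Torelli for holomorphic symplectic varieties closes the argument immediately.
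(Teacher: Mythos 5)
Your argument is correct and is exactly the route the paper takes: the paper's proof consists of a single appeal to Beauville's local Torelli theorem \cite[Th\'eor\`eme 5]{Be83}, which is precisely the key step you identify (the differential of the period map being the contraction with the symplectic form, an isomorphism onto $\mathrm{Hom}(H^{2,0}, H^{1,1}_{prim})$). Your write-up merely makes explicit the smoothness and dimension bookkeeping that the paper leaves implicit.
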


The Kuga-Satake construction actually defines a morphism of Shimura varieties 
$$KS_{\C} : \mathcal S_{n, \C}\ra \mathcal A_{g, d', n, \C}.$$
It is a finite morphism. We get the following decomposition of the Kuga-Satake mapping.

\begin{prop}\label{factor}
The Kuga-Satake mapping
$$\kappa_{\C} : T_{\C}\ra \mathcal A_{g, d', n, \C}$$
factorizes as $KS_{\C}\circ j$, where
$$j : T_{\C}\ra \mathcal S_{n, \C}$$
is \'etale and 
$$KS_{\C} : \mathcal S_{n, \C}\ra \mathcal A_{g, d', n, \C}$$
is finite.
\end{prop}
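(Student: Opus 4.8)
The plan is to reduce the statement to the theory of morphisms of Shimura varieties. Since $j : T_{\C} \to \mathcal{S}_{n,\C}$ is étale by the preceding proposition, only the factorization $\kappa_{\C} = KS_{\C} \circ j$ and the finiteness of $KS_{\C}$ remain to be addressed. The factorization is essentially formal, and I would establish it by comparing the two maps on complex points: by construction $\kappa_{\C}$ sends a point $t$ to the Kuga-Satake abelian variety of $\mathcal{X}_t$, which depends only on the polarized weight-$2$ Hodge structure carried by $H^2(\mathcal{X}_t, \Q)_{prim}$. The period map $j$ records precisely this Hodge structure as a point of the orthogonal Shimura variety $\mathcal{S}_{n,\C}$, whereas $KS_{\C}$, being the Kuga-Satake morphism at the level of Shimura varieties, sends such a point to its associated Kuga-Satake variety. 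Hence $KS_{\C}(j(t))$ and $\kappa_{\C}(t)$ coincide for every complex point $t$, and since all three maps are morphisms of complex varieties, the equality $\kappa_{\C} = KS_{\C} \circ j$ follows.

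The finiteness of $KS_{\C}$ is the substantive input; it is established in \cite[Theorem 8.4.3]{An96} (see also \cite{Ri10}), and I would argue it as follows. The map $KS_{\C}$ comes from a morphism of Shimura data: the action of $CSpin(V)$ on the full Clifford algebra $C$ by left multiplication realizes it as a subgroup of $GL(C)$ preserving, up to a scalar, the symplectic form attached to the polarization of $A$; as an injective homomorphism of reductive $\Q$-groups is a closed immersion, this exhibits $CSpin(V)$ as a closed subgroup of $GSp(C)$. Quasi-finiteness of $KS_{\C}$ is then the easy half: by the primitive embedding (\ref{sub}), the Hodge structure on $H^2(\mathcal{X}_t, \Z)_{prim}$ is recovered, up to finite ambiguity, as a sub-Hodge structure of $\mathrm{End}(H^1(A, \Z))$, so the point $j(t)$ is determined by $A$ together with its level structure up to finitely many possibilities. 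For properness I would invoke the functoriality of the Baily-Borel compactification with respect to morphisms of Shimura data: the closed immersion of groups extends $KS_{\C}$ to a morphism of minimal compactifications $\overline{\mathcal{S}}_{n,\C} \to \overline{\mathcal{A}}_{g,d',n,\C}$ carrying the boundary into the boundary. This extended morphism is proper since its source is complete, and as the preimage of the open part $\mathcal{A}_{g,d',n,\C}$ is precisely $\mathcal{S}_{n,\C}$, the restriction $KS_{\C}$ is proper as well; together with quasi-finiteness this yields finiteness.

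The main obstacle is the properness in the finiteness statement, where the geometry of the Baily-Borel boundary genuinely enters; the factorization and the quasi-finiteness are formal once the morphism of Shimura data is in hand. A secondary technical point is the compatibility between the orthogonal level structure $\mathbb{K}_n^{ad}$ used to define $\mathcal{S}_n$ and the spin level $\mathbb{K}_n^{sp}$ through which the Kuga-Satake representation is most naturally described; this matching of level structures is part of the standard Kuga-Satake formalism and is handled in \cite{An96}.
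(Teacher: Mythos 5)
Your proposal is correct and follows essentially the same route as the paper, which derives the statement from the local Torelli theorem (for the \'etaleness of $j$) together with the fact that $KS_{\C}$ is a morphism of Shimura varieties induced by the spin representation and is therefore finite; the paper simply asserts these last points, while you supply the standard supporting arguments (pointwise identification of $\kappa_{\C}$ with $KS_{\C}\circ j$, quasi-finiteness via the primitive embedding, and properness via functoriality of the Baily--Borel compactification). Your closing remark about matching the level structures $\mathbb{K}_n^{sp}$ and $\mathbb{K}_n^{ad}$ is exactly the compatibility the paper delegates to Andr\'e's treatment of spin level structures.
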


\subsection{Divisors on the period space}\label{NL}

In this section, we recall a slightly adapted version of Theorem 3.1 of \cite{Mau12} that proves the ampleness of some components of the Noether-Lefschetz locus in the moduli space of polarized $K3$ surfaces. 

\bigskip

Recall that $d=q(h)$, where $h$ is the class of the polarization of $\mathcal X\ra T$. Let $L_0$ be a lattice containing $L$ such that the embedding $L\subset L_0$ is isomorphic to the embedding of the primitive cohomology of $Y$ into its full second cohomology group. We also denote by $h\in L$ the image of the ample class of $Y$.

If $\Lambda$ is a rank $2$ lattice of the form
$$\Lambda=\left(\begin{matrix} d & a\\
                                          a  & b\end{matrix}\right)$$
with negative discriminant, let $H_{\Lambda}$ be the locus in the period domain $\Omega$ of points $\omega$ such that there exists $v\in L$ with $\psi(v,v)=b$, $\psi(v, h)=a$ and $\psi(\omega, v)=0$.

By definition of $\mathbb K_n^{ad}$, the divisor $H_{\Lambda}$ descends to a divisor $D_{\Lambda}$ in $S_{n, \C}$. We will also denote by $D_{\Lambda}$ the divisor on the generic fiber of $T$ obtained via the period map.

\bigskip

Let $\lambda$ be the Hodge bundle on $\mathcal S_{n, \C}$. By definition, it is induced by the tautological line bundle over the period space. The Hodge bundle pulls back by the period map to the Hodge bundle on $T_{\C}$. Recall that the Hodge bundle on $T$, which we denote by $\lambda$ as well, is defined as
$$\lambda = \pi_* \Omega^2_{\mathcal X/T}.$$

The following Theorem is stated in \cite{Mau12} for period spaces of $K3$ surfaces, but the proof extends to the general case without any change. It relies in an essential way on Borcherds' results in \cite{Bo98, Bo99}.

\begin{thm}[\cite{Mau12}, Theorem 3.1]\label{borcherds}
Let $(\Lambda_k)_{k\in\N}$ be an infinite sequence of pairwise non isomorphic lattices as above. Then there exists a Cartier divisor $D$ on $\mathcal S_{n, \C}$, supported on a finite union of the $D_{\Lambda_k}$ such that 
$$\mathcal O(D)=\lambda^{\otimes a}$$
for some positive integer $a$.
\end{thm}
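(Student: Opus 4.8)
The plan is to deduce the statement from Borcherds' construction of automorphic products on $O(2, b-3)$, transcribing the strategy of \cite{Mau12}. The Shimura variety $\mathcal{S}_{n, \C}$ is a quotient of the period domain $\Omega$ attached to the lattice $(L, \psi)$ of signature $(2, b-3)$, and the Hodge bundle $\lambda$ is, up to passing to the level-$n$ cover, the line bundle of modular forms of weight one. For an integer $k$, the weight-$k$ modular forms are sections of $\lambda^{\otimes k}$; thus to produce the identity $\mathcal{O}(D) = \lambda^{\otimes a}$ it suffices to exhibit a meromorphic modular form of some positive weight $a$ whose divisor is an effective combination of the Heegner divisors $D_{\Lambda_k}$.

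First I would set up the theta lift. The divisors $H_{\Lambda}$, hence the $D_{\Lambda}$, are exactly the Heegner (rational quadratic) divisors cut out by vectors $v \in L$ with fixed negative norm $\psi(v,v) = b$ and fixed pairing $\psi(v,h) = a$ with the polarization; the pair $(a, b)$ is recorded by a discriminant class $\gamma$ and an index $m$ of a coefficient of a vector-valued modular form for the Weil representation $\rho_L$ of the discriminant form of $L$. Borcherds' theorem \cite{Bo98} attaches to every weakly holomorphic modular form $f = \sum_{\gamma, m} c(\gamma, m)\, q^m e_{\gamma}$ of weight $1 - (b-3)/2$ for $\rho_L$ a meromorphic modular form $\Psi(f)$ of weight $c(0,0)/2$ whose divisor equals $\sum_{\gamma,\, m < 0} c(\gamma, m)\, H_{\gamma, m}$. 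Choosing $f$ with nonnegative principal part supported on the indices attached to the $\Lambda_k$ and with positive constant term $c(0,0) = 2a$ would yield precisely the desired $D$ and the relation $\mathcal{O}(D) = \lambda^{\otimes a}$.

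The existence of such an $f$ is governed by Borcherds' obstruction theory via Serre duality for vector-valued modular forms \cite{Bo99}: a prescribed principal part $\{c(\gamma, m)\}_{m < 0}$ is the principal part of a weakly holomorphic form of weight $1 - (b-3)/2$ if and only if it is orthogonal, under the pairing matching $c(\gamma, m)$ with $a(\gamma, -m)$, to every cusp form $g = \sum a(\gamma, m)\, q^m e_{\gamma}$ of the dual weight $(b-1)/2$ for the complementary representation. The space $S$ of such cusp forms is finite-dimensional. Each $D_{\Lambda_k}$ determines, via its index $(\gamma_k, m_k)$ with $m_k < 0$, a linear functional $\ell_k : g \mapsto a(\gamma_k, -m_k)$ on $S$. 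Since there are infinitely many pairwise non-isomorphic $\Lambda_k$ but $S$ is finite-dimensional, I would extract a finite subfamily whose functionals $\ell_k$ satisfy a linear relation and, combined with the positive weight direction, assemble an effective combination $\sum a_k D_{\Lambda_k}$ together with a positive constant term $2a$ lying in the kernel of the obstruction map.

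The main obstacle is exactly this positivity. Plain linear algebra only produces a dependency with coefficients of arbitrary sign, whereas I need the $a_k$ to be nonnegative, so that $D$ is effective and supported on the $D_{\Lambda_k}$, and the weight $a$ to be strictly positive. This is a statement about the convex cone spanned by the obstruction functionals in the finite-dimensional dual space $S^{*}$: one must show that the constant-term direction lies in this cone. Concretely I would use a separating-hyperplane argument together with the fact that the Fourier coefficients $a(\gamma, m)$ of a cusp form cannot keep a fixed sign as $m \to \infty$, which rules out a functional separating the constant-term direction from the $\ell_k$. Once positivity is secured the theorem follows, and I would remark that the argument uses no property of the Beauville-Bogomolov form beyond its signature $(2, b-3)$, which is why the $K3$ proof of \cite{Mau12} applies verbatim in the present generality.
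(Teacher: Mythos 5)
The paper does not reprove this statement: it simply cites \cite[Theorem 3.1]{Mau12} and observes that Maulik's proof uses nothing about the lattice beyond its signature, so it applies verbatim to the Beauville--Bogomolov form. Your proposal reconstructs the argument that actually underlies that citation --- identify the $D_{\Lambda_k}$ with Heegner divisors indexed by pairs $(\gamma, m)$ for the discriminant form of $L$, apply Borcherds' lift \cite{Bo98} to a weakly holomorphic vector-valued form of weight $1-(b-3)/2$, and use the obstruction theory of \cite{Bo99} (orthogonality to the finite-dimensional space of cusp forms of the dual weight $(b-1)/2$) to produce the input form --- and this architecture, including the weights and the reduction to a finite-dimensional linear-algebra problem, is correct and is the same route as \cite{Mau12}. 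Your closing remark that only the signature $(2,b-3)$ is used is precisely the paper's justification for quoting the result in this generality.

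There is, however, a gap at the step you yourself flag as the crux. To defeat a separating functional you invoke the assertion that ``the Fourier coefficients of a cusp form cannot keep a fixed sign as $m\to\infty$.'' As stated this is neither proved nor sufficient: the relevant coefficients of a vector-valued cusp form along the particular sparse subsequence of indices $(\gamma_k,-m_k)$ produced by your $\Lambda_k$ could perfectly well all be nonnegative (for instance, all zero), so no contradiction results. The mechanism that actually works, and that \cite{Mau12} uses, is quantitative rather than oscillatory: the constant term $c(0,0)$ of the input form is forced, by pairing the principal part with the Eisenstein series of weight $(b-1)/2$, to equal $-\sum c(\gamma,m)\,b(\gamma,-m)$, where the Eisenstein coefficients $b(\gamma,n)$ are \emph{negative} at every index supporting a nonempty Heegner divisor and grow like $n^{(b-1)/2-1}$ (Bruinier--Kuss), while cusp-form coefficients are $O(n^{(b-1)/4})$; since infinitely many pairwise non-isomorphic $\Lambda_k$ force $|m_k|\to\infty$ along a subsequence ($\gamma$ ranges over the finite discriminant group --- a point you should also record), the Eisenstein part dominates any putative separating cusp form at large index and simultaneously guarantees that the resulting weight $a=c(0,0)/2$ is strictly positive. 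You should replace your sign-change claim by this growth comparison; without it the separating-hyperplane step does not close.
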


\section{Partial compactifications of the moduli space and existence of one line bundle}\label{proof1}

\subsection{Making the Kuga-Satake mapping finite}

One of the main results of \cite{Mau12}, and one that we are wishing to avoid, is the fact that families of supersingular $K3$ surfaces with semi-stable reduction do not degenerate. The analogous result for supersingular abelian varieties is well-known, see \cite[Proof of Theorem 1.1.a]{Oo74}, essentially because of the criterion of N\'eron-Ogg-Shafarevich. As a consequence, the result for $K3$ surfaces, or more generally for varieties as in Theorem \ref{weak}, would follow if the Kuga-Satake mapping were finite.

In this section, we give a very simple construction of a canonical partial compactification of $T$ over which the Kuga-Satake mapping extends to a finite morphism to $\mathcal A_{g, d', n}$.
\bigskip

As in Proposition \ref{factor}, the Kuga-Satake map $\kappa :T\ra \mathcal A_{g, d', n}$ admits a factorization over $\C$ through
$$j : T_{\C}\ra \mathcal S_{n, \C}$$
and 
$$KS_{\C} : \mathcal S_{n, \C}\ra \mathcal A_{g, d', n, \C}.$$

By Zariski's main theorem, there exists a normal variety $\widetilde{T}_{\C}$ over $\C$ containing $T_{\C}$ as an open subvariety such that $j$ extends to a finite morphism $\widetilde{T}_{\C}\ra \mathcal S_{n, \C}$. Since the Kuga-Satake map $T_{\C}\ra \mathcal A_{g, d', n, \C}$ is defined over $K$, it is easy to see that $\widetilde{T}_{\C}$ has a model $\widetilde{T}_K$ over $K$ such that the induced map 
$$\widetilde{T}_{\C}\ra \mathcal A_{g, d', n, \C}$$
is defined over $K$.

Let $T'$ be the normal scheme over $W$ defined by gluing the $W$-schemes $T$ and $\widetilde{T}_K$ along their common open subscheme $T_K$. By definition, the Kuga-Satake map extends to a morphism
$$\kappa' : T'\ra \mathcal A_{g, d', n}.$$
Since $\kappa$ is quasifinite by Proposition \ref{qf}, $\kappa'$ is quasifinite as well. The map $KS_{\C}$ above is a morphism of Shimura varieties. As a consequence, it is finite. This proves that $\kappa'$ is a finite morphism when restricted to the generic fiber of $T'$.

\bigskip

Now applying Zariski's main theorem, we can find a normal $W$-scheme $\overline T$ and a dominant open immersion $T'\hookrightarrow \overline T$ such that $$\kappa' : T'\ra \mathcal A_{g, d', n}$$ extends to a finite morphism 
$$\overline{\kappa} : \overline T \ra \mathcal A_{g, d', n}.$$

We can summarize the preceding construction in the following statement.

\begin{prop}\label{finite}
There exists a normal, separated $W$-scheme $\overline T$, and a dominant open immersion 
$$i : T\hookrightarrow \overline T$$
such that 
\begin{enumerate}
\item The Kuga-Satake map $\kappa$ extends to a finite morphism 
$$\overline{\kappa} : \overline T \ra \mathcal A_{g, d', n}.$$
\item The generic fiber of $\overline\kappa\circ i$ factorizes through the period map
$$j : T_{\C} \ra \mathcal S_{n, \C}.$$
\end{enumerate}
\end{prop}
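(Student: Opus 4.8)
This statement repackages the construction just carried out, so my plan is to phrase that construction as two applications of Zariski's main theorem, one over $\C$ (then descended to $K$) and one over $W$, arranged so that the generic fiber is compactified through the period map \emph{before} the special fiber is treated; this ordering is what will secure property (2).

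First I would use the factorization $\kappa_{\C} = KS_{\C} \circ j$ of Proposition \ref{factor}, in which $j : T_{\C} \ra \mathcal S_{n, \C}$ is \'etale, hence quasi-finite and separated, and $KS_{\C} : \mathcal S_{n, \C} \ra \mathcal A_{g, d', n, \C}$ is finite. Since $\mathcal S_{n, \C}$ is a normal quasi-projective variety, Zariski's main theorem produces a normal variety $\widetilde T_{\C}$ containing $T_{\C}$ as a dense open subscheme together with a finite extension $\widetilde T_{\C} \ra \mathcal S_{n, \C}$ of $j$; composing with $KS_{\C}$ yields a finite morphism $\widetilde T_{\C} \ra \mathcal A_{g, d', n, \C}$ extending $\kappa_{\C}$.

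Next I would descend this to $K$. The Kuga-Satake morphism and its target are defined over $K$, and $\widetilde T_{\C}$ is obtained from $T_{\C}$ by the canonical procedure of Zariski's main theorem, so it has a model $\widetilde T_K$ over $K$ with the finite morphism $\widetilde T_K \ra \mathcal A_{g, d', n, K}$ defined over $K$. Gluing the $W$-schemes $T$ and $\widetilde T_K$ along their common open subscheme $T_K$ gives a normal $W$-scheme $T'$ containing $T$ as a dense open subscheme, and the two Kuga-Satake maps agree on $T_K$, so they glue to a morphism $\kappa' : T' \ra \mathcal A_{g, d', n}$. Quasi-finiteness of $\kappa$ (Proposition \ref{qf}) gives quasi-finiteness of $\kappa'$, while on the generic fiber $\kappa'_K : \widetilde T_K \ra \mathcal A_{g, d', n, K}$ is finite, being the composite of the finite map $\widetilde T_K \ra \mathcal S_{n, K}$ with the finite morphism of Shimura varieties $\mathcal S_{n, K} \ra \mathcal A_{g, d', n, K}$ (of which $KS_{\C}$ is the base change to $\C$).

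Finally I would apply Zariski's main theorem once more over $W$, to the quasi-finite separated morphism $\kappa'$: this produces a normal, separated $W$-scheme $\overline T$, a dominant open immersion $i : T' \hookrightarrow \overline T$ (and so $T \hookrightarrow \overline T$), and a finite extension $\overline\kappa : \overline T \ra \mathcal A_{g, d', n}$, giving (1). The step I expect to demand the most care is the interplay between separatedness and property (2): I must check that the gluing actually produces a \emph{separated} $W$-scheme $T'$ (which is needed both for the statement and in order to legitimately invoke Zariski's main theorem in the last step), and that, because $\kappa'_K$ is already finite, this last application does not alter the generic fiber, so that $\overline T_K = \widetilde T_K = T'_K$ and $\overline\kappa \circ i$ on the generic fiber factors through the period map $j$.
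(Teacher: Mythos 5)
Your proposal follows essentially the same route as the paper: the factorization $\kappa_{\C}=KS_{\C}\circ j$, a first application of Zariski's main theorem to compactify $j$ over $\C$, descent of $\widetilde T_{\C}$ to a model $\widetilde T_K$, gluing with $T$ over $T_K$ to get $T'$, and a second application of Zariski's main theorem over $W$ to obtain the finite extension $\overline\kappa$. The extra care you flag about separatedness of the glued scheme and about the last step not changing the generic fiber (so that property (2) survives) is correct and is a point the paper leaves implicit.
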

The second condition above shows that the complex points of $\overline T$ parametrize Hodge structures of weight $2$.

\noindent\textbf{Remark.} Through the Kuga-Satake map, it should actually possible to give a modular interpretation of the space $\overline T$ and of its special fiber. It is for instance likely that the $p$-divisible group associated to the formal Brauer group of the universal variety over $T_k$ actually extends to a $p$-divisible group over $\overline T_k$. However, one of the points of this paper is that this modular interpretation is not needed.

\subsection{The supersingular locus in $\overline T$}

We start by defining the supersingular locus in $\overline T$.

\begin{df}
The supersingular locus in $\overline T$ is the inverse image by $\overline \kappa$ of the locus of supersingular abelian varieties in $\mathcal A_{g, d', n}$.
\end{df}

The following is one of the main points of our proof. It is a straightforward consequence of a result of Oort.

\begin{prop}\label{proper}
The supersingular locus in $\overline T$ is projective.
\end{prop}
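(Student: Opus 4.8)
The plan is to obtain projectivity formally, by combining the finiteness of the extended Kuga--Satake map $\overline\kappa$ from Proposition \ref{finite} with Oort's theorem that the supersingular locus does not degenerate inside the moduli space of abelian varieties. All of the substance lies in Oort's non-degeneration result, which has already been invoked above via \cite[Proof of Theorem 1.1.a]{Oo74}; the remainder of the argument is standard scheme theory.

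First I would record the input from Oort in the precise form needed. Supersingularity is a closed condition (it is the deepest Newton stratum, where all slopes equal $1/2$), so the locus $\mathcal S$ of supersingular abelian varieties in $\mathcal A_{g, d', n}$ is a closed subscheme, necessarily contained in the special fibre $\mathcal A_{g, d', n}\otimes k$. Oort's theorem asserts that $\mathcal S$ is complete over $k$: a family of polarized supersingular abelian varieties over a punctured curve extends, after a finite base change, to a family over the whole curve with supersingular special fibre. This is exactly where the criterion of N\'eron--Ogg--Shafarevich enters, replacing the far harder non-degeneration statement for families of supersingular $K3$-type varieties used in \cite{Mau12}. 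Being a proper closed subscheme of the quasi-projective $k$-scheme $\mathcal A_{g, d', n}\otimes k$, the locus $\mathcal S$ is therefore projective over $k$.

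Next I would transport this along $\overline\kappa$. By definition the supersingular locus $\overline T^{ss}$ in $\overline T$ is the scheme-theoretic preimage $\overline\kappa^{-1}(\mathcal S)$, a closed subscheme of $\overline T$ supported in the special fibre. By Proposition \ref{finite} the morphism $\overline\kappa : \overline T \ra \mathcal A_{g, d', n}$ is finite, hence its restriction $\overline T^{ss} \ra \mathcal S$, being the base change of $\overline\kappa$ along the closed immersion $\mathcal S \hookrightarrow \mathcal A_{g, d', n}$, is again finite. Since a finite morphism is projective and projective morphisms compose (over a Noetherian base), the composition $\overline T^{ss} \ra \mathcal S \ra \mathrm{Spec}\,k$ exhibits $\overline T^{ss}$ as projective over $k$, which is the assertion. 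I do not anticipate a genuine obstacle in this last step: the only non-formal ingredient is Oort's completeness theorem, and the real work — constructing the \emph{finite} extension $\overline\kappa$ in the first place — has already been carried out in Proposition \ref{finite}.
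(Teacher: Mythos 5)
Your proof is correct and follows exactly the paper's own argument: quote Oort's result that the supersingular locus in $\mathcal A_{g, d', n}$ is projective, then pull back along the finite morphism $\overline{\kappa}$ from Proposition \ref{finite}. The extra details you supply (closedness of the Newton stratum, base change of finite morphisms, composition of projective morphisms) are all standard and consistent with what the paper leaves implicit.
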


\begin{proof}
In the course of the proof of Theorem 1.1.a in \cite{Oo74}, Oort proves that the locus of supersingular abelian varieties in $\mathcal A_{g, d', n}$ is a projective subvariety of $\mathcal A_{g, d', n}$. Since $\overline{\kappa}$ is finite, the supersingular locus in $\overline T$ is projective.
\end{proof}

Over $T$, the supersingular locus above coincides with the locus of points $t$ such that the fiber $\mathcal X_t$ is supersingular, as the next proposition shows. The corresponding result in the ordinary case was proved by Nygaard in \cite[Proposition 2.5]{Ny83}.

\begin{prop}\label{equal}
Let $X_t$ be a fiber of $\pi$ over a $k$-point of $T$. Then $X_t$ is supersingular if and only if the Kuga-Satake abelian variety of $X_t$ is supersingular.
\end{prop}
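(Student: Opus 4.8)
The plan is to rephrase supersingularity in both settings as a statement about the slopes of Frobenius on crystalline cohomology, and then transport this slope information across the crystalline Kuga-Satake embedding of Proposition \ref{crystalline}. By the results of \cite{Ar74} recalled in Section \ref{reduced}, $X_t$ is supersingular if and only if the $F$-crystal $H^2_{crys}(X_t/W)$ is supersingular, i.e. all its slopes equal $1$; equivalently, writing $V_t=H^2_{crys}(X_t/W)(1)_{prim}$ for the primitive part with its weight-zero twist, $X_t$ is supersingular if and only if all slopes of $V_t$ vanish. On the other side, an abelian variety over $k$ is supersingular if and only if all slopes of its $H^1_{crys}$ equal $1/2$, which is the same as saying that all slopes of $\mathrm{End}(H^1_{crys})$ vanish.

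First I would treat the implication: Kuga-Satake variety supersingular $\Rightarrow$ $X_t$ supersingular. This is the easy direction and is immediate from Proposition \ref{crystalline}. Evaluating the strict embedding of filtered Frobenius crystals (\ref{subcrys}) at the point $t$ gives an embedding of $F$-isocrystals $V_t\hookrightarrow \mathrm{End}(H^1_{crys}(\mathcal A_t/W))$. If $\mathcal A_t$ is supersingular the right-hand side has all slopes zero, hence so does the sub-isocrystal $V_t$ (a sub-isocrystal realizes a sub-multiset of the ambient slopes). Since moreover $c_1(L)$ spans an algebraic, slope-$1$ line in $H^2_{crys}(X_t)$, the whole $H^2$-crystal is supersingular, so $X_t$ is supersingular.

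The main work is the converse: $X_t$ supersingular $\Rightarrow$ its Kuga-Satake variety is supersingular. Here the mere embedding $V_t\hookrightarrow \mathrm{End}(H^1_{crys})$ does not suffice, since a priori the nonzero slopes of $\mathrm{End}(H^1_{crys})$ could all lie outside the sub-isocrystal $V_t$. The key input is that the Kuga-Satake abelian variety is built from the Clifford algebra, so that the Newton (slope) cocharacter $\nu$ of Frobenius factors through the group $\mathrm{CSpin}(V)$: the $F$-isocrystal $H^1_{crys}(\mathcal A_t/W)[1/p]$ is the spin representation $\mathrm{Cl}(V_t)$, on which $V_t$ acts by Clifford multiplication compatibly with Frobenius, as encoded by (\ref{subcrys}). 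The group $\mathrm{CSpin}(V)$ sits in an exact sequence $1\to\mathbb{G}_m\to \mathrm{CSpin}(V)\to \mathrm{SO}(V)\to 1$ in which the central $\mathbb{G}_m$ acts trivially on $V$ and by a single scalar weight on the spin module. When $X_t$ is supersingular, $V_t$ has all slopes zero, so the projection of $\nu$ to $\mathrm{SO}(V)$ is trivial and $\nu$ factors through the central $\mathbb{G}_m$. Since the center acts on the spin module through one character and $H^1_{crys}(\mathcal A_t)$ is pure of weight $1$, the single slope produced by this central cocharacter is forced to be $1/2$. Thus all slopes of $H^1_{crys}(\mathcal A_t)$ equal $1/2$ and $\mathcal A_t$ is supersingular.

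I expect the factorization of the Newton cocharacter through $\mathrm{CSpin}(V)$, together with the identification of $H^1_{crys}(\mathcal A_t)$ with the spin representation as an $F$-isocrystal, to be the technical heart of the argument. Concretely, one may instead phrase it by noting that supersingularity of $X_t$ means a power $F^N$ of Frobenius acts on $V_t$ through a finite group of isometries, and then checking, using the Clifford-module structure of (\ref{subcrys}) and the weight-$1$ polarization on $H^1_{crys}(\mathcal A_t)$, that a suitable power of Frobenius acts on $\mathrm{Cl}(V_t)$ as $p^m$ times a finite-order operator, which again forces all slopes to be $1/2$. Either route reduces the statement to the compatibility of slopes under the spin construction, which is the same mechanism underlying Nygaard's ordinary case in \cite[Proposition 2.5]{Ny83}.
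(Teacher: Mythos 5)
Your easy direction (Kuga--Satake variety supersingular $\Rightarrow$ $X_t$ supersingular) is fine: a sub-$F$-isocrystal of a slope-zero isocrystal has all slopes zero, and Proposition \ref{crystalline} supplies the sub-object. The gap is in the converse, precisely at the step you yourself flag as ``the technical heart'': the assertion that the Newton cocharacter of $H^1_{crys}(\mathcal A_t)$ factors through $\mathrm{CSpin}(V)$ does not follow from the embedding (\ref{subcrys}), and nothing established in the paper up to this point provides it. What (\ref{subcrys}) gives is that Frobenius conjugation on $\mathrm{End}(H^1_{crys}(\mathcal A_t))$ preserves $V_t$ and acts on it by an isometry, hence that Frobenius normalizes the Clifford algebra $C(V_t)\subset\mathrm{End}(H^1)$. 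But by Skolem--Noether the normalizer of $C(V_t)$ in $GL(H^1)$ is generated by the Clifford group of $V_t$ together with the full commutant of $C(V_t)$, and that commutant is a matrix algebra of rank $2^{\dim V}$, not the scalars; so Frobenius may differ from a $\mathrm{CSpin}$-element by an arbitrary unit of the commutant, whose slopes are not controlled by $V_t$. Concretely, if all slopes of $V_t$ vanish, each isoclinic piece $H^1_\lambda$ is a $C(V_t)\otimes\overline{K}$-module, i.e.\ a multiple of the $2^{\dim V/2}$-dimensional simple module, and several distinct slopes can perfectly well occur. Your fallback formulation ($F^N$ fixes $V_t$, hence centralizes $C(V_t)$) hits the same wall: it only places $F^N$ in the commutant of $C(V_t)$, which is far from being the scalars.

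The paper closes exactly this gap, but $\ell$-adically: besides the left-multiplication embedding $P\hookrightarrow\mathrm{End}(H^1(A,\Q_{\ell}))$ it uses the isomorphism of Galois modules $C(P)\simeq\mathrm{End}_{C(P)}(H^1(A,\Q_{\ell}))$, i.e.\ an identification of the commutant itself with the Clifford algebra. Supersingularity of $X_t$ then forces Frobenius to centralize both $C(P)$ and its commutant, hence, by the double centralizer theorem for the central simple algebra $C(P)$, all of $\mathrm{End}(H^1)$; so Frobenius is a homothety and $A$ is supersingular. To repair your argument you must either import this control of the commutant into the crystalline setting (essentially knowing the full Clifford bimodule structure on $H^1_{crys}$ compatibly with Frobenius, which is much more than (\ref{subcrys})) or simply argue $\ell$-adically as the paper does --- note the paper's definition of supersingularity is \'etale to begin with, so no translation into slopes is needed. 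You would also need to handle the odd-rank case separately, where the full Clifford algebra is not central simple and the paper passes to $C^+(P)$ and an abelian variety $B$ with $A$ isogenous to $B^2$.
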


\begin{proof}
Let $\ell$ be a prime number prime to the characteristic of $k$. The varieties $X_t$ and $A$ are defined over a finite field $k_0$. We denote by $G_{k_0}$ the absolute Galois group of $k_0$. Let $P=H^2(X_t, \Q_{\ell}(1))_{prim}$ and let $C(P)$ be the Clifford algebra associated to $P$. By standard properties of the Kuga-Satake construction, there is an isomorphism of $G_{k_0}$-modules 
\begin{equation}\label{trivial}
C(P) \simeq \mathrm{End}_{C(P)}(H^1(A, \Q_{\ell})).
\end{equation}
We have to show that $X$ is supersingular if and only if $A$ is.

\bigskip 

Let us first assume that the dimension of $P$ is even. By \cite[Paragraphe 9, n. 4, Corollaire after Th\'eor\`eme 2]{BourA9}, $C(P)$ is a central simple algebra. Assume that $A$ is supersingular. Up to replacing $k_0$ by a finite extension, we can assume that the Frobenius acts trivially on $H^1(A, \Q_{\ell})$. Equation (\ref{trivial}) then shows that the Frobenius action on $P$ is trivial, which implies that $X_t$ is supersingular.

Conversely, if $X_t$ is supersingular, we can assume that the Frobenius morphism acts trivially on $\mathrm{End}_{C(P)}(H^1(A, \Q_{\ell}))$. As a consequence, it acts through the center of $C(P)$. Since this center is trivial, Frobenius acts on $H^1(A, \Q_{\ell})$ by a homothety. This implies that $A$ is supersingular.

\bigskip

In case the dimension of $P$ is odd, the even part $C^+(P)$ of the Cliffford algebra $C(P)$ is a central simple algebra by \cite[Paragraphe 9, n. 4, Th\'eor\`eme 3]{BourA9}. By standard properties of the Kuga-Satake construction, and up to replacing $k_0$ by a finite extension, $A$ is isogenous to the square of an abelian variety $B$ over $k_0$, such that there is an isomorphism of $G_{k_0}$-modules 
$$C^+(P) \simeq \mathrm{End}_{C^+(P)}(H^1(B, \Q_{\ell})).$$
Here $B$ is the Kuga-Satake variety used in \cite{Mau12}. We have to show that $X$ is supersingular if and only if $B$ is. The same argument as above shows this equivalence.
\end{proof}

\subsection{The closure of some Hodge loci}

The main result of this section is the key to avoiding the degeneration results of \cite{Mau12}. We investigate the geometrical properties of the Zariski closure of the Hodge locus $D_{\Lambda}$ of \ref{NL} in $\overline T$.

\bigskip

Before stating the result, we introduce the following notation. If 
$$\Lambda =\left(\begin{matrix}d & a\\
                                          a  & b\end{matrix}\right)$$  
is a lattice and $r$ is a positive integer, let 
$$\Lambda_r=\left(\begin{matrix}d & p^r a\\
                                          p^r a  & p^{2r} b\end{matrix}\right).$$
Note that there is an embedding of lattices $\Lambda_r\hookrightarrow\Lambda$ that sends the second base vector $v$ to $p^r v$. As a consequence, with the notations of \ref{NL}, the divisor $D_{\Lambda_r}$ contains the divisor $D_{\Lambda}$.

Recall that by construction the family $\pi : \mathcal X\ra T$ contains $X$ as a special fiber.

\begin{prop}\label{inclus}
Let $\Lambda$ be a lattice as in \ref{NL}, and let $\overline{D}_{\Lambda}$ be the Zariski closure of $D_{\Lambda}$ in $\overline T$. Let $C$ be the connected component of the supersingular locus of $\overline T$ passing through the point of $T$ corresponding to $X$.

If the intersection of $C$ and $\overline{D}_{\Lambda}$ is nonempty, then there exists a positive integer $r$ such that $\overline{D}_{\Lambda_r}$ contains the support of $C$.
\end{prop}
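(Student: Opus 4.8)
The plan is to take a $k$-point $z$ in the nonempty intersection $C\cap\overline{D}_\Lambda$ and to spread the class defining $D_\Lambda$ over the entire connected supersingular component $C$, at the cost of multiplying it by a power of $p$. First I would record what a point of $\overline{D}_\Lambda$ carries. By construction $\overline{D}_\Lambda$ is the Zariski closure of the characteristic-zero Noether--Lefschetz divisor $D_\Lambda$ of \ref{NL}, whose points carry a class $v$ in primitive cohomology lying in $F^1$ with $q(v)=b$ and $q(v,h)=a$. Passing to the crystalline realization and specializing, the point $z$ then carries a Tate class $v$, i.e. a Frobenius-fixed element of the filtered Frobenius crystal $R^2\pi_*(1)_{prim}$ of Proposition \ref{crystalline} lying in $F^1$, with $q(v)=b$ and $q(v,h)=a$. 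Here I use that the polarization class $h$ is a flat global section and that $q$ extends over $\overline{T}$.

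The heart of the argument is to propagate $v$ over all of $C$. By Proposition \ref{equal} every fibre over $C$ is supersingular, so over $C$ the primitive crystal $R^2\pi_*(1)_{prim}$ --- or rather its extension to $\overline{T}$ furnished by the embedding (\ref{subcrys}) into the endomorphism crystal of the Kuga--Satake abelian scheme --- is isoclinic of slope $0$. Following Ogus's theory of supersingular $K3$ crystals, the associated sheaf of Tate classes over the connected base $C$ is, after inverting $p$, locally constant: the unit-root structure turns $v$ into a flat, possibly multivalued, section $\tilde v$ of $R^2\pi_*(1)_{prim}\otimes\Q$ over $C$. Flat transport is Frobenius-equivariant and preserves both $q$ and the flat section $h$, so at every point of $C$ we obtain a rational Tate class of the same type, with $q(\tilde v)=b$ and $q(\tilde v,h)=a$; the monodromy merely permutes such classes among themselves and does not alter this type. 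Equivalently, in the abelian picture of (\ref{sub}), $v$ becomes a quasi-endomorphism of the supersingular Kuga--Satake abelian scheme, whose rational structure is constant along $C$ because all fibres are isogenous.

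It remains to pass from rational to integral classes and to identify the lattice. The only denominators introduced by the crystalline transport are powers of $p$, and since $C$ is projective by Proposition \ref{proper}, hence quasi-compact, these denominators are uniformly bounded: there is a single positive integer $r$ such that $w:=p^r\tilde v$ is an integral Tate class at every point of $C$. Then $q(w)=p^{2r}b$ and $q(w,h)=p^r a$, so the rank-two lattice spanned by $h$ and $w$ has Gram matrix $\Lambda_r$. Since $w$ lies in $F^1$ and is Frobenius-fixed, the deformation and lifting theory for such Tate classes places each point of $C$ in the closure of the characteristic-zero locus $D_{\Lambda_r}$, that is, in $\overline{D}_{\Lambda_r}$. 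As $\overline{D}_{\Lambda_r}$ is closed, it contains all of $C$, which is exactly the assertion that $\overline{D}_{\Lambda_r}$ contains the support of $C$.

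The main obstacle is the uniform clearing of denominators. Extending $v$ as a rational flat section over $C$ is essentially the rigidity of supersingular crystals, but controlling the $p$-power denominators \emph{simultaneously} at every point of the projective component $C$ --- so that a single $r$ works and $w$ lands precisely in the lattice $\Lambda_r$ rather than in some larger overlattice --- is the delicate point, and it is exactly here that the projectivity of the supersingular locus supplied by Oort's theorem (Proposition \ref{proper}) is indispensable. A secondary care point is the transition across $\overline{T}\setminus T$, where the primitive crystal must be understood through the embedding (\ref{subcrys}) into the Kuga--Satake abelian scheme, which is defined over all of $\overline{T}$ via $\overline\kappa$.
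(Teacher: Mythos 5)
Your overall shape --- propagate the class from a point of $C\cap\overline{D}_{\Lambda}$ over the whole supersingular component at the cost of a power of $p$, then conclude that every point of $C$ has the right Noether--Lefschetz type --- matches the paper's strategy, but there are two genuine gaps. First, the propagation step: $C$ lives in $\overline T$, not in $T$, so there is no primitive crystal over $C$ in which to transport flat sections; the embedding (\ref{subcrys}) is only available on formal neighborhoods of points of $T$. The paper gets around this by working with honest endomorphisms of the Kuga--Satake abelian scheme over $C$ (which does exist over all of $\overline T$ via $\overline\kappa$): the class at the intersection point is the class of an endomorphism $\phi_0$ of $\mathcal A_0$, and the fact that a $p$-power multiple spreads over $C$ is exactly the statement that the cokernel of the specialization map $\mathrm{End}(\mathcal A_{k(C)})\to\mathrm{End}(\mathcal A_0)$ is killed by a power of $p$, which holds because the generic fibre over $C$ is supersingular (Proposition \ref{equal}). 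Your ``uniform bound on denominators by quasi-compactness'' is replaced by this single algebraic statement; the projectivity of $C$ (Proposition \ref{proper}) is not what makes the $p$-power uniform --- it is used elsewhere, to guarantee that the ample Noether--Lefschetz divisor actually meets $C$.

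Second, and more seriously, your last step begs the question. Knowing that every point of $C$ carries an integral Frobenius-fixed class in $F^1$ of type $\Lambda_r$ does not by itself place that point in the Zariski closure of the characteristic-zero divisor $D_{\Lambda_r}$: one must actually lift the pair (variety, class) to characteristic zero and recognize the lifted class as an algebraic Hodge class. This is where most of the work in the paper's proof lies: (i) Lemma \ref{divisor} shows, via Katz's theory of flat sections over the formal neighborhood, that the class of the propagated endomorphism $\psi_1$ at the point corresponding to $X$ lands in $H^2_{crys}(X/W)_{prim}\subset\mathrm{End}(H^1_{crys}(\mathcal A_1/W))$; (ii) Grothendieck--Messing theory shows that the versal deformation locus $\Sigma$ of the pair $(\mathcal A_1,\psi_1)$ is a divisor in the formal neighborhood of that point, since the obstruction lives in the one-dimensional space $H^2(X,\mathcal O_X)$; (iii) Ogus's ordinarity argument shows that no component of $\Sigma$ is contained in the special fibre, so $\Sigma$ is flat over $W$ and has a nonempty generic fibre whose Zariski closure contains $C$; (iv) at a complex point of $\Sigma_K$ the class is a Hodge class in $H^2(\,\cdot\,,\Z)_{prim}$, hence by the Lefschetz $(1,1)$ theorem the class of a line bundle, which identifies $\Sigma_K$ as a subset of $D_{\Lambda_r}$ and pins down the Gram matrix. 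Step (iii) in particular is not automatic --- a priori the deformation divisor could be entirely contained in characteristic $p$, in which case membership in the closure of a characteristic-zero locus would fail --- and your proposal contains no substitute for it.
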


\noindent\textbf{Remark.} By the second part of Proposition \ref{finite}, we can view $D_{\Lambda}$ as a divisor on the generic fiber of $\overline T$, which is why the statement above makes sense.

\begin{proof}
The proposition above is close in spirit to the statement of \cite[Theorem 1.1.a]{Ar74}, and our point of view is somewhat similar to that of \cite{dJ12}. The main idea of our argument is very simple and can be summarized as follows. 

Assume that $C$ actually parametrizes a family of supersingular polarized varieties -- this is not true as $C$ does not need to lie in $T$. Then the assumption of the proposition means that there exists a polarized supersingular deformation $X_0$ of $X$, together with a line bundle $\mathcal L_0$  on $X_0$ such that the lattice generated by $\mathcal L_0$ and the polarization in the N\'eron-Severi group of $X_0$ is isomorphic to $\Lambda$. 

Now consider the versal deformation space of $(X_0, \mathcal L_0)$ inside $T$. The arguments from \cite[Theorem 1.6]{De81} show that it is a divisor $D$ in $T$ which is flat over $W$. By the argument of Artin above, see also \cite[Theorem 1]{dJ12}, and up to replacing $\mathcal L_0$ by $\mathcal L_0^{\otimes p^r}$ for some positive $r$, this divisor contains $C$, and the intersection properties of $\mathcal L_0$ imply that its generic fiber is contained in $D_{\Lambda_r}$. This implies the result.

In our situation, we do not know that $C$ parametrizes a family of $K3$ surfaces. However, the Kuga-Satake map provides us with a family of supersingular abelian varieties over $C$, which will be enough for our purposes. We now proceed with the proof.

\bigskip

By assumption, there exists a finite extension $R$ of $W$ with residue field $k$ and fraction field $F$, and a $R$-point $P$ of $\overline T$, such that the generic fiber of $P$ is a $F$-point $t$ of $\overline T$ lying on $D_{\Lambda}$ and such that the special fiber of $P$ is a $k$-point $t_0$ of $C$.

Let $\pi : \mathcal A \ra \overline{T}_R$ be the family of polarized abelian varieties obtained by pulling back the universal family over $\mathcal A_{g, d', n}$ by the Kuga-Satake map $\overline\kappa$ of Proposition \ref{finite}. Using Equation (\ref{sub}) of section \ref{KS}, there exists an endomorphism $\phi$ of the polarized abelian variety $\mathcal A_s$ and a component $D$ of the intersection of $D_{\Lambda}$ with $T_F$ such that, locally, $D$ is the image in $T_F$ of the versal deformation space of the pair $(\mathcal A_t, \phi)$. In other words, $D$ is the locus in $T_F$ where $\phi$ deforms with $\mathcal A_t$.

The point $t$ specializes to $t_0$. Let us write $\mathcal A_0$ for $\mathcal A_{t_0}$. The endomorphism $\phi$ specializes to an endomorphism $\phi_0$ of $\mathcal A_0$. There exists a positive integer $n$ such that the pair $(\mathcal A_0, p^r \phi_0)$ deforms over $C$. Indeed, the generic fiber $\mathcal A_{k(C)}$ of the universal polarized abelian scheme $\mathcal A$ over $C$ is supersingular by Proposition \ref{equal}, which implies that the cokernel of the specialization map
$$\mathrm{End}(\mathcal A_{k(C)})\ra \mathrm{End}(\mathcal A_0)$$
is killed by a power of $p$. In particular, the pair $(\mathcal A_0, p^r \phi_0)$ deforms to a pair $(\mathcal A_1, \psi_1)$ such that $\mathcal A_1$ is the Kuga-Satake abelian variety associated to the variety $X$ itself over $k$. Let $t_1$ be the corresponding point of $T$.

\bigskip

We want to study the versal deformation space of the pair $(\mathcal A_1, \psi_1)$ by making use of $X$. This can be done using the following canonical embedding
\begin{equation}\label{subcrys1}
H^2_{crys}(X/W)_{prim} \hookrightarrow \mathrm{End}(H^1_{crys}(\mathcal A_1/W))
\end{equation}
induced by Equation (\ref{subcrys}).
Given a lifting of the polarized variety $X$ to characteristic zero, the groups above are identified to relative de Rham cohomology groups, and (\ref{subcrys1}) is flat and respects the Hodge filtration. 

\begin{lem}\label{divisor}
Let $[\psi_1]\in \mathrm{End}(H^1_{crys}(\mathcal A_1/W))$ be the crystalline cohomology class of $\psi_1$. Then $[\psi_1]$ lies in the image of $H^2_{crys}(X_1/W)_{prim}$ by the morphism (\ref{subcrys1}).
\end{lem}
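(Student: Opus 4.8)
The plan is to show that $\psi_1$ is a \emph{special endomorphism} of the Kuga-Satake variety $\mathcal A_1$, by which I mean that its realizations lie in the distinguished sub-object cut out by the embeddings (\ref{sub}), (\ref{primitive}) and (\ref{subcrys}). For $\mathcal A_1$, which is the Kuga-Satake variety of $X=X_1$, this sub-object is by construction exactly the image of $H^2_{crys}(X_1/W)_{prim}$ under (\ref{subcrys1}), so the lemma is equivalent to the assertion that $\psi_1$ is special. The crucial point is that the condition of being special is intrinsic to the polarized abelian variety together with its Clifford module structure --- it can be phrased as commuting with the right Clifford multiplication and being self-adjoint for the polarization --- and therefore makes sense over the whole of $\overline T$, where no family of holomorphic symplectic varieties is available, and is a closed condition that is stable under specialization and under flat deformation.

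First I would record that $\phi$ is special at the point $t$. By the construction in the preceding paragraph, the endomorphism $\phi$ of $\mathcal A_t$ is produced via Equation (\ref{sub}) from the extra primitive class $v\in H^2(\mathcal X_t)_{prim}$ whose existence defines the divisor $D_\Lambda$; equivalently, the class of $\phi$ is the image of $v$ under the étale embedding (\ref{primitive}). Thus $\phi$ is special essentially by definition of $D_\Lambda$. Next I would propagate specialness along the chosen path $t\rightsquigarrow t_0\rightsquigarrow t_1$ using only the Kuga-Satake abelian scheme $\mathcal A\to\overline T$. The Clifford multiplication and the polarization extend over $\overline T_R$, so both the notion of a special endomorphism and the specialization map on endomorphisms are defined there and are compatible; since specialness is a closed condition, the specialization $\phi_0$ of $\phi$ is special, and hence so is $p^r\phi_0$ because the special endomorphisms form a module. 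Finally, $(\mathcal A_0,p^r\phi_0)$ was deformed along $C$ to $(\mathcal A_1,\psi_1)$, and a deformation of a special endomorphism remains special: the special sub-object is a sub-$F$-crystal of $\mathrm{End}(H^1_{crys}(\mathcal A/W))$ that deforms flatly with the family, so membership in it is preserved. Therefore $\psi_1$ is special, and reading this statement through (\ref{subcrys1}) for the Kuga-Satake variety $\mathcal A_1$ of $X_1$ yields the claim that $[\psi_1]$ lies in the image of $H^2_{crys}(X_1/W)_{prim}$.

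The main obstacle, and the step requiring the most care, is precisely that the connected component $C$ need not lie in $T$, so the primitive sub-crystal $R^2\pi_*\Omega^\bullet_{prim}$ of Proposition \ref{crystalline} is a priori only defined over $T$, and the intermediate point $t_0$ may escape $T$. The resolution is to characterize the image of (\ref{subcrys}) intrinsically on the abelian side, as the special endomorphisms for the Clifford and polarization structures carried by $\mathcal A\to\overline T$, so that the notion transports across $t_0$ without reference to any $K3$-type fiber there. One then has to verify two compatibilities: that this intrinsic sub-object agrees with the image of (\ref{subcrys}) at points of $T$, which follows from the strictness in Proposition \ref{crystalline}, and that it is genuinely stable under the specialization and the deformation used above, the latter relying on the supersingularity of $\mathcal A_{k(C)}$ established via Proposition \ref{equal}. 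Granting these points, the identification of $[\psi_1]$ with a class coming from $H^2_{crys}(X_1/W)_{prim}$ is immediate.
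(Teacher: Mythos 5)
Your overall strategy --- transporting membership in the distinguished sub-object along $t \rightsquigarrow t_0 \rightsquigarrow t_1$ using only the Kuga-Satake abelian scheme over $\overline T$, since $t_0$ may escape $T$ --- is the right one, and your bookkeeping of $\phi$, $\phi_0$, $p^r\phi_0$, $\psi_1$ matches the paper. But the mechanism you propose for making the sub-object available away from $T$ has a genuine gap. You claim the image of primitive cohomology inside $\mathrm{End}(H^1(A))$ is cut out intrinsically by ``commuting with right Clifford multiplication and being self-adjoint for the polarization.'' The commutant of right multiplication by $C(V)$ is $C(V)$ itself acting by left multiplication, and the adjoint of $L_c$ with respect to the Kuga-Satake polarization $\mathrm{Tr}(a\,x^{\iota}y)$ is $L_{c^{\iota}}$; so your two conditions cut out the fixed locus of the main antiautomorphism $\iota$, a subspace of $C(V)$ of dimension exponential in $b$, not the $(b-1)$-dimensional image of $V$. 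There is no such elementary commutant-plus-involution description of $V\subset C(V)$: pinning down $V$ requires carrying an explicit projector as an extra Hodge/crystalline tensor and proving that it extends horizontally and Frobenius-equivariantly over the integral model --- essentially the content of Madapusi Pera's approach via integral canonical models, which the paper explicitly contrasts with its own method. Your two ``compatibilities to verify'' also do not follow from the strictness in Proposition \ref{crystalline}, which concerns only the Hodge filtration, not the identification of the image with an intrinsically defined sub-object.

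The paper avoids any pointwise characterization. It works in the ambient crystal $\mathrm{End}(R^1\psi_*\Omega^{\bullet}_{\widehat{\mathcal A}/\widehat B})$, which is defined over all of $\overline T$ because $\mathcal A$ is pulled back from $\mathcal A_{g,d',n}$, and invokes Katz's rigidity of horizontal sections: the class $\alpha=[\phi]$ at $t$ lies in the primitive sub-object (available on the generic fiber by Proposition \ref{finite}), its unique flat continuation $\widetilde\alpha$ over $\widehat B[1/p]$ therefore remains in that sub-object, and the crystalline class of the deformation $\widetilde\psi$ of $p^r\phi_0$ over $C$ is itself horizontal and agrees with $\widetilde\alpha$ at a common point, hence coincides with it by uniqueness of flat sections. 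Membership at $t_1\in T(k)$ follows, and torsion-freeness of the cokernel of (\ref{subcrys1}) upgrades the conclusion from $K$- to $W$-coefficients. To repair your argument, replace the ``intrinsic characterization'' step by this flatness/rigidity argument; as written, the key step fails.
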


\begin{proof}
Since the cokernel of (\ref{subcrys1}) is torsion-free, we can work with crystalline cohomology groups tensored with $K$ and only show that $[\psi_1]$ lies in the image of $H^2_{crys}(X_1/K)$.

Let $\widehat B$ be the formal neighborhood of $t_0$ in $\overline T$. The formal scheme $\widehat B$ is flat over $\mathrm{Spf}(R)$ since $\overline T$ is flat over $\mathrm{Spec}(W)$. Recall the morphism of Equation (\ref{subcrys})
$$R^2\pi_*\Omega^{\bullet}_{\widehat{\mathcal X}/\widehat B}(1)_{prim} \hookrightarrow \mathrm{End}(R^1\psi_*\Omega^{\bullet}_{\widehat{\mathcal A}/\widehat B}).$$

Let $\alpha$ be the cohomology class of $\phi$ in $\mathrm{End}(H^1_{dR}(\mathcal A_t/F))$. By \cite[Proposition 8.9]{Ka70}, there exists a unique flat section $\widetilde \alpha$ of $\mathrm{End}\mathbb (R^1\pi_*\Omega^{\bullet}_{\mathcal A/\widehat B[1/p]})$ over $\widehat B[1/p]$ passing through $\alpha\,$\footnote{The result of \cite{Ka70} above only works over a smooth base, and $\widehat B[1/p]$ might not be smooth. However, the abelian scheme $\mathcal A$ over $\widehat B$ comes from the universal abelian scheme over $\mathcal A_{g, d', n}$, which is smooth and where Katz' result applies.}.

Since $\widetilde{\alpha}$ is flat and $\alpha$ belongs to $H^2_{dR}(\mathcal A_t/F)_{prim}$ by assumption, $\widetilde{\alpha}$ comes from a section of $R^2\pi_*\Omega^{\bullet}_{\widehat{\mathcal X}/\widehat B[1/p]}(1)_{prim}{[1/p]}$ over $\widehat B[1/p]$.

\bigskip

On the other hand, the endomorphism $\psi_1$ deforms by assumption  over $C$ to an endomorphism $\widetilde{\psi}$ of the abelian scheme $\mathcal A/C$. The crystalline cohomology class $[\widetilde\psi]$ of $\widetilde\psi$ induces a section of the convergent isocrystal $R^2\pi_*\Omega^{\bullet}_{\widehat{\mathcal X}/\widehat B}(1)_{prim}[1/p] _{|C}$. 

By definition of $\alpha$, we have $\widetilde{\alpha}_t=[\widetilde\psi]_{t_0}$ under the various identifications. By flatness, if $t'$ is a point of $\widehat B[1/p]$ that specializes to the generic point $\eta_C$ of $\widehat C$, we get $\widetilde{\alpha}_{t'}=[\widetilde\psi]_{\eta_C}$. By the remarks above, this implies the result.
\end{proof}

\bigskip

The preceding lemma allows us to control the deformation theory of the pair $(\mathcal A_1, \psi_1)$ as in the sketch of proof above. Indeed, standard deformation theory arguments and, for instance, Grothendieck-Messing theory as in \cite{Me72} -- see also \cite[Theorem 2.4]{CO09} for a summary --shows that the obstruction to deforming $\psi_1$ with $\mathcal A_1$ is controlled by the group $H^2(X, \mathcal O_{X})$. As this $k$-vector space is one-dimensional, the versal deformation space of $(\mathcal A_1, \psi_1)$ is a divisor $\Sigma$ in the formal neighborhood $\widehat T$ of $t_1$ in $T$. 

By the argument of \cite[Theorem 2.9]{Og79}, no component of $\Sigma$ can dominate the special fiber $T_k$. Since the result is not stated as such in \cite{Og79} as we do not know a priori that $\psi_1$ comes from a line bundle on $X_1$, let us sketch how Ogus' proof works in our setting. 

Ogus shows by a dimension count that $T_k$ contains an ordinary point. As a consequence, if some component of $\Sigma$ dominates $T_k$, then one can deform $(\mathcal A_1, \psi_1)$ to a pair $(\mathcal A_2, \psi_2)$ where $\mathcal A_2$ is the Kuga-Satake variety of an ordinary fiber of $\pi$. There we can assume that $\psi_2$ is not divisible by $p$, and the deformation arguments of \cite[Proposition 2.2]{Og79} give the result.

This shows that $\Sigma$ is a flat divisor in $\widehat{T}$. Furthermore, one of its component contains the supersingular component $C$ by assumption. As a consequence, the Zariski closure of the generic fiber $\Sigma_K$ of $\Sigma$ contains $C$.

We claim that $\Sigma_K$ is included in some $D_{\Lambda_r}$. First, let $t$ be any complex point of $\Sigma_K$. By definition, $t$ corresponds to a polarized variety $\mathcal X_t$ and an endomorphism $\psi$ of its Kuga-Satake abelian variety $\mathcal  A_t$. Furthermore, again by assumption, the cohomology class of $\psi$ lies in the subspace $H^2(X, \Z)_{prim}$ of $\mathrm{End}(H^1(A_t, \Z))$. 

Since the Kuga-Satake correspondence is induced by a Hodge class, the cohomology class $[\psi]$ of $\psi$ in Betti cohomology is a Hodge class. As a consequence, it comes from a Hodge class in $H^2(X, \Z)_{prim}$, which shows by the Lefschetz $(1,1)$ theorem that it is the cohomology class of a line bundle $\mathcal L$ on $X$. It is easy to check that the lattice generated by $c_1(\mathcal L)$ and the polarization in the N\'eron-Severi group of $X$ is isomorphic to $\Lambda_r$. This concludes the proof.
\bigskip

\end{proof}

\subsection{Finding one line bundle}

Let 
$$\Lambda_i=\left(\begin{matrix}d & a_i\\
                                          a_i  & b_i\end{matrix}\right)$$
be an infinite family of pairwise non-isomorphic lattices, and let $D_i=D_{\Lambda_i}$. By Theorem \ref{borcherds}, there exists a Cartier divisor $D$ in $T_K$ supported on a finite union of the $D_i$ such that $\mathcal O(D)=\lambda^{\otimes a}$ for some positive integer $a$. 

\begin{prop}\label{closure}
Let $\overline D$ be the Zariski closure of $D$ in $\overline T$. Then the special fiber $\overline D_k$ of $\overline D$ is an ample $\Q$-Cartier divisor.
\end{prop}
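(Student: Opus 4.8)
The plan is to identify the class of $\overline D_k$ with a positive rational multiple of the restriction of the Hodge bundle $\lambda$, and then to prove positivity of $\lambda$ on the special fibre by transporting it, through the finite Kuga--Satake map $\overline\kappa$, to the Hodge bundle on the Siegel moduli space, where ampleness is available in all characteristics. The first point is to extend $\lambda$ to $\overline T$. On the generic fibre the Kuga--Satake morphism of Shimura varieties of Proposition \ref{factor} identifies, up to a fixed positive power $N$, the Hodge line bundle $\lambda$ with the pullback of the Hodge line bundle $\omega$ on $\mathcal A_{g,d',n}$ (the determinant of the dual of the Lie algebra of the universal abelian scheme): concretely $\lambda^{\otimes N}\cong\overline\kappa^{*}\omega$. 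Since $\omega$ is defined over $W$ and $\overline\kappa$ is a morphism of $W$-schemes, the right-hand side is a genuine line bundle on all of $\overline T$, and I would use it to define $\lambda$ as a $\mathbb{Q}$-line bundle on the normal scheme $\overline T$. Because $\overline D$ is the horizontal closure of the effective divisor $D$ and, by Theorem \ref{borcherds}, $\mathcal O(D)\cong\lambda^{\otimes a}$ on the dense generic fibre $T_K$, the $\mathbb{Q}$-line bundles $\mathcal O(\overline D)$ and $\lambda^{\otimes a}$ agree on $T_K$ and hence differ on $\overline T$ only by a divisor supported on the special fibre; restricting to $\overline T_k$ shows $\overline D_k$ is $\mathbb{Q}$-linearly equivalent to $a\,\lambda|_{\overline T_k}$ up to vertical contributions.

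Next I would establish the ampleness. By the theory of Faltings--Chai the bundle $\omega$ extends to an \emph{ample} line bundle on the minimal compactification $\mathcal A^{*}_{g,d',n}$ over $W$. By Proposition \ref{proper} and Oort's theorem the supersingular locus $C$ of $\overline T$ is projective, and since supersingular abelian varieties do not degenerate, by the criterion of N\'eron--Ogg--Shafarevich its image $\overline\kappa(C)$ is a projective subvariety of $\mathcal A_{g,d',n,k}$ disjoint from the boundary of $\mathcal A^{*}_{g,d',n,k}$. Therefore the ample bundle on $\mathcal A^{*}_{g,d',n}$ restricts to an ample bundle on $\overline\kappa(C)$, and as $\overline\kappa|_{C}$ is finite, $\overline\kappa^{*}\omega|_{C}=\lambda^{\otimes N}|_{C}$ is ample. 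The same argument applies verbatim to any projective subvariety of $\overline T_k$, whose image avoids the Baily--Borel boundary precisely because it is proper and already lies in $\mathcal A_{g,d',n,k}$; combined with the first paragraph this gives the asserted ampleness of $\overline D_k$, and in particular its ampleness on the component $C$, which is what the sequel needs in order to force $C\cap\overline D_k\neq\emptyset$ via Proposition \ref{inclus}.

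The main obstacle is the bookkeeping of the vertical and boundary contributions. One must check that passing from the generic-fibre identities $\mathcal O(D)\cong\lambda^{\otimes a}$ and $\lambda^{\otimes N}\cong\overline\kappa^{*}\omega$ to statements on $\overline T_k$ introduces no harmful components: each discrepancy is supported on the special fibre, and since the full special fibre equals $\mathrm{div}(p)$ and is therefore principal, I would argue that these discrepancies restrict trivially, or at worst effectively, to $\overline T_k$. Here the horizontality of $\overline D$ and the normality of $\overline T$ are essential, as is the fact that the supersingular locus meets only the good-reduction part of the Siegel space, so that no boundary term can spoil positivity. Once this is under control, the ampleness of the Hodge bundle on the Faltings--Chai compactification, pulled back along the finite morphism $\overline\kappa$, yields that $\overline D_k$ is an ample $\mathbb{Q}$-Cartier divisor.
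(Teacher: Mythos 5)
Your overall strategy is the one the paper follows: identify $\mathcal O(M\overline D)$ with $\overline\kappa^*(\lambda_{\mathcal A}^{\otimes N})$ and conclude from the ampleness of the Hodge bundle on $\mathcal A_{g,d',n,k}$ together with the finiteness of $\overline\kappa$. Your discussion of why the relevant projective pieces of $\overline T_k$ avoid the boundary of the minimal compactification is a reasonable, slightly more careful elaboration of the paper's terse citation of \cite[V.2.3]{FC90}.

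However, the step you yourself flag as ``the main obstacle'' is a genuine gap, and the fix you propose does not work. When you extend the section of $\lambda^{\otimes a}$ with divisor $D$ from $T_K$ to a rational section over $\overline T$, its divisor is $\overline D + V$ with $V$ vertical, i.e.\ a $\Z$-linear combination of irreducible components of the special fibre $\overline T_k$. Your argument that $V$ is harmless because ``the special fibre equals $\mathrm{div}(p)$ and is therefore principal'' only applies if $V$ is proportional to the \emph{entire} special fibre; there is no reason for $\overline T_k$ to be irreducible ($T$ is produced by algebraization, shrinking and \'etale covers, and $\overline T$ by gluing and normalization), so $V$ can involve some but not all components, with arbitrary signs, and then $\mathcal O(-V)|_{\overline T_k}$ is an uncontrolled line bundle that can destroy ampleness. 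Even your fallback ``or at worst effectively'' does not help: twisting by an effective vertical divisor still changes the restriction to $\overline T_k$ by an uncontrolled class. The paper supplies exactly the missing input by invoking \cite[Lemma 5.12]{Mau12}, which establishes the identity $\mathcal O(MD')=\overline\kappa^*(\lambda_{\mathcal A}^{\otimes N})$ \emph{integrally} over the smooth locus $U$ of $\overline T\to\mathrm{Spec}\,W$ --- this is where the real content lies, resting on the integral behaviour of the Borcherds section --- and then uses normality of $\overline T$ and the fact that $\overline T\setminus U$ has codimension at least $2$ to extend the section so that its divisor is exactly $M\overline D$, with no vertical components at all. Without an argument of this kind your proof does not close.
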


\begin{proof}
Consider the Kuga-Satake mapping 
$$KS_{\C} : \mathcal S_{n, \C}\ra \mathcal A_{g, d', n, \C}$$
of section \ref{period}.

By the argument of \cite[Proposition 5.8]{Mau12}, if $\lambda_{\mathcal A}$ denotes the Hodge bundle on $\mathcal A_{g, d', n, \C}$, there exists a positive integer $r$ such that 
$$KS_{\C}^*(\lambda_{\mathcal A}^{\otimes r})=\lambda^{\otimes(2^{b-1}r)},$$
where $b$ is the second Betti number of $X$.

As a consequence, after pulling back to $T_K$, we can write 
$$\mathcal O(MD)=\kappa_K^*(\lambda_{\mathcal A}^N)$$
for some positive integers $M$ and $N$ -- taking powers of line bundles to descend the equality from $T_{\C}$ to $T_K$.

\bigskip

Let $U$ be the smooth locus of $\overline T \ra \mathrm{Spec}\,W$, and let $D'$ be the closure of $D$ in $U$. By Lemma 5.12 in \cite{Mau12}, $\mathcal O(MD')=\overline{\kappa}^*(\lambda_{\mathcal A}^N)_{|U}$.
Let $s$ be a section of $\overline{\kappa}^*(\lambda_{\mathcal A}^c)$ over $U$ with divisor $bD'$. 

Since $\overline T$ is normal, the complement of $U$ in $\overline T$ has codimension at most $2$ in $\overline T$ and $s$ extends to a section of $\overline{\kappa}^*(\lambda_{\mathcal A}^N)$ over $\overline T$. The divisor of $s$ is the closure of $MD'$ in $\overline T$, which precisely means that the divisor of $s$ in $\overline T$ is $M\overline D$ and shows that 
$$\mathcal O(M\overline D)=\overline{\kappa}^*(\lambda_{\mathcal A}^N).$$

By \cite[V.2.3]{FC90}, the Hodge line bundle $\lambda_{\mathcal A}$ is ample on $\mathcal A_{g, d', n, k}$. Since $\overline{\kappa}$ is finite, this proves that $M\overline D_k$ is an ample Cartier divisor and concludes the proof.
\end{proof}

We now prove the following first step in the direction of Theorem \ref{weak}. We keep the notations as above.

\begin{prop}\label{one}
There exists a complex point $t$ of $T$ such that $\mathcal X_t$ specializes to $X$ and has Picard number at least $2$.
\end{prop}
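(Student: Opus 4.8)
The goal is to produce one complex point of $T$, specializing to the supersingular fiber $X$, whose corresponding variety carries an extra line bundle beyond the polarization. I will combine the ampleness result of Proposition \ref{closure} with the closure-of-Hodge-loci result of Proposition \ref{inclus}, using the projectivity of the supersingular locus from Proposition \ref{proper}.

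\begin{proof}[Proof of Proposition \ref{one}]
\textbf{Plan.} The strategy is to force an intersection between the supersingular component $C$ through $X$ and one of the Noether--Lefschetz divisors $\overline{D}_{\Lambda_i}$, then transfer this intersection back into $T$ via Proposition \ref{inclus}.

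First I would recall that, by Proposition \ref{proper}, the connected component $C$ of the supersingular locus of $\overline T$ passing through $X$ is projective, hence in particular proper and of positive dimension (its dimension equals the dimension of the supersingular locus in the relevant Shimura-type moduli space, which is positive because $b>3$). By Proposition \ref{closure}, the special fiber $\overline D_k$ is an ample $\Q$-Cartier divisor on $\overline T_k$, where $\overline D$ is the Zariski closure of a divisor $D$ supported on finitely many of the $D_i$. Since $C$ is a projective variety of positive dimension and $\overline D_k$ is ample, the intersection $C\cap \overline D_k$ is nonempty. Because $\overline D_k$ is supported on the closures $\overline{D}_{\Lambda_i}$ restricted to the special fiber, there must exist some index $i$ such that $C\cap \overline{D}_{\Lambda_i}$ is nonempty.

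Now I would apply Proposition \ref{inclus} to this $\Lambda_i$: since $C\cap \overline{D}_{\Lambda_i}$ is nonempty, there exists a positive integer $r$ such that $\overline{D}_{(\Lambda_i)_r}$ contains the support of $C$. In particular $\overline{D}_{(\Lambda_i)_r}$ meets $T$ (indeed it passes through the point of $T$ corresponding to $X$, which lies in $C$), so its generic fiber $D_{(\Lambda_i)_r}$, viewed as a divisor on $T_K$ via the second part of Proposition \ref{finite}, is a nonempty divisor on $T$. Choosing a complex point $t$ on $D_{(\Lambda_i)_r}$ whose specialization is $X$ -- such a point exists because $\overline{D}_{(\Lambda_i)_r}$ dominates the component $C$ through $X$ and $\overline T$ is flat over $W$ -- the variety $\mathcal X_t$ carries, by the very definition of the divisors $D_{\Lambda}$ in section \ref{NL}, a primitive Hodge class $v$ with $\psi(v,v)=p^{2r}b_i$ and $\psi(v,h)=p^r a_i$ independent from $h$. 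By the Lefschetz $(1,1)$ theorem this class is algebraic, so $\mathcal X_t$ has a line bundle $\mathcal L$ whose class is independent of the polarization, giving Picard number at least $2$.

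\textbf{Main obstacle.} The delicate point is guaranteeing that the intersection produced by ampleness actually yields a complex point specializing to $X$ with the extra class \emph{independent} of $h$, rather than a spurious class proportional to the polarization; this is exactly why the matrices $\Lambda_i$ are required to have negative discriminant, ensuring the rank-$2$ lattice they define is genuinely of rank $2$ and the new class cannot be a multiple of $h$. One must also be careful that the passage through the period/Kuga--Satake picture and the closure operations of Proposition \ref{finite} keep the relevant divisor flat over $W$ so that a characteristic-zero point specializing to the chosen supersingular fiber really exists; these flatness points are handled by the normality and flatness of $\overline T$ established earlier.
\end{proof}
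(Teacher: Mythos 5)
Your proposal follows essentially the same route as the paper: use Proposition \ref{proper} to make the supersingular component $C$ through $X$ projective, use the ampleness of $\overline D_k$ from Proposition \ref{closure} to force $C\cap\overline D_k\neq\emptyset$, and then upgrade this to containment of $C$ in some $\overline D_{(\Lambda_i)_r}$ via Proposition \ref{inclus}, yielding a characteristic-zero lift with an extra, non-proportional $(1,1)$-class. The only point where you are lighter than the paper is the positivity of $\dim C$ (without which the ampleness argument gives nothing): the paper derives this from the Artin--Ogus height stratification, which gives $\dim C = b-3-E((b-1)/2)$, rather than from the bare inequality $b>3$.
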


\begin{proof}
Let $x$ be the $k$-point of $T$ corresponding to $X$. By \cite[Theorem 15]{Og11}, the supersingular locus in $T$ is closed of dimension $s=b-3-E((b-1)/2)$, where $E$ is the integer part function. Indeed, the height of varieties parametrized by $\mathcal X\ra T$ varies between $1$ and $E((b-1)/2)$ if it is finite, and the locus of points in $T$ with height at least $h$ has codimension $h-1$. Together with Proposition \ref{proper}, this shows that there is a nontrivial proper $s$-dimensional component $C$ in the supersingular locus of  $\overline T$ containing $x$. Note that $s>0$.

Let $\overline D$ be as in the preceding section, with $B_i=0$. By Proposition \ref{closure}, some multiple of $\overline D_k$ is an ample Cartier divisor. As a consequence, the intersection of $\overline D$ and $C$ is not empty. By Proposition \ref{inclus}, there is a lattice $\Lambda$ of the form
$$\Lambda_i=\left(\begin{matrix}2d & a\\
                                          a  & 0\end{matrix}\right)$$
such that the Zariski-closure of $D_{\Lambda}$ in $\overline{T}$ contains $C$. As a consequence, $X$ is the specialization of a variety with Picard number at least $2$.
\end{proof}

\noindent\textbf{Remark.} In the case of $K3$ surfaces, this argument shows that $X$ carries an elliptic pencil and satisfies the Tate conjecture by Artin's theorem in \cite{Ar74}.

\section{Proof of Theorem \ref{weak}}\label{gen}

We now adapt the techniques of the preceding section to prove Theorem \ref{weak}. We keep the notations as above.

\subsection{Lifting many line bundles to characteristic zero}

In this section, we prove the following result.

\begin{prop}\label{induction}
Let $x$ be the point of $\overline T$ corresponding to $X$, and let $C$ be the connected component of the supersingular locus of $\overline T$ containing $x$. There exist a $k$-point $y$ of $C$ and a complex point $z$ of $\overline T$ with the following properties.
\begin{enumerate}
\item Under the identifications of Proposition \ref{finite}, the point $z$ specializes to $y$.
\item The weight $2$ Hodge structure parametrized by $z$ has Picard rank $b-3$.
\end{enumerate}
\end{prop}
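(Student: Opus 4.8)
The plan is to promote the argument of Proposition \ref{one} into an induction that produces algebraic classes one at a time, each step being controlled by the same three ingredients: Borcherds' ample Noether--Lefschetz divisors (Theorem \ref{borcherds}), the projectivity of the supersingular locus (Proposition \ref{proper}), and the propagation statement of Proposition \ref{inclus}. The guiding principle is that at a supersingular point the Kuga--Satake variety is supersingular (Proposition \ref{equal}), hence has a large endomorphism algebra, and that via the embedding (\ref{subcrys1}) every endomorphism landing in $H^2_{crys}(X/W)_{prim}$ that lifts to a Hodge class on a characteristic-zero deformation produces, by the Lefschetz $(1,1)$ theorem, a line bundle on the corresponding complex variety. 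Deforming such an endomorphism together with the Kuga--Satake scheme is a divisorial condition, since the obstruction lives in the one-dimensional group $H^2(X, \mathcal O_X)$ (as in Lemma \ref{divisor}); this is what allows one to impose the classes successively.

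Concretely, I would argue inductively. Suppose I have produced a supersingular point $y_i$ of $C$, a complex point $z_i$ of $\overline T$ specializing to it, and a rank-$(i+1)$ sublattice $M_i \ni h$ of classes that are of type $(1,1)$ at $z_i$. Restricting attention to the locus where $M_i$ stays algebraic gives a sub-period-space carrying its own finite Kuga--Satake map (the residual Clifford construction on $M_i^{\perp}$) and, by Proposition \ref{proper} together with \cite{Oo74}, its own projective supersingular locus; by \cite[Theorem 15]{Og11} the relevant component through $y_i$ has positive dimension as long as the residual second Betti number is large enough. Applying Theorem \ref{borcherds} to this sub-period-space yields an ample divisor supported on Noether--Lefschetz divisors, which must meet the projective supersingular component; Proposition \ref{inclus} then supplies a new class $v_{i+1}$ that propagates over that component and, using the flatness over $W$ established in the proof of Proposition \ref{inclus}, a new complex point $z_{i+1}$ specializing into $C$ at which $M_{i+1}=M_i+\Z v_{i+1}$ is of type $(1,1)$. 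This raises the Picard rank by one.

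The induction delivers classes as long as the supersingular component of the sub-period-space remains positive-dimensional, and the dimension count of \cite[Theorem 15]{Og11} shows that this holds until the residual Betti number, and hence the dimension of the residual Kuga--Satake abelian variety, has become small. At that stage the intersection mechanism breaks down: once the relevant supersingular component is no longer positive-dimensional there is no ample divisor meeting it, so Proposition \ref{inclus} can no longer manufacture a fresh class. This is precisely the point where, as announced in the introduction, the argument must leave the realm of the Noether--Lefschetz machinery and appeal to known cases of the Hodge conjecture for the now low-dimensional residual Kuga--Satake variety, which guarantees that the remaining Tate/crystalline classes are realized by genuine Hodge, hence algebraic, classes on a characteristic-zero lift. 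Pushing the Picard rank up the last few units in this way yields a complex point $z$ of maximal Picard rank $b-3$ specializing to a point $y$ of $C$, as required.

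The hard part will be this endgame, and more precisely the passage from classes that are merely Galois-invariant (Tate) or crystalline to genuinely algebraic ones: the Kuga--Satake correspondence is not known to be algebraic, so one cannot directly conclude that a supersingular endomorphism of the Kuga--Satake variety comes from a line bundle on $X$, which is exactly why the detour through low-dimensional abelian varieties seems unavoidable. A secondary but essential technical difficulty is to verify that each Noether--Lefschetz sub-locus genuinely inherits the full structure used above --- a finite Kuga--Satake map, a projective supersingular locus, and the Ogus dimension estimate --- so that Propositions \ref{proper} and \ref{inclus} can be reapplied at every step, and that the complex points $z_i$ can be chosen so as to keep specializing inside the fixed component $C$ rather than wandering to a different component of the supersingular locus.
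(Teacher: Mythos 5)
Your first phase follows the paper's route: iterate Borcherds' ample Noether--Lefschetz divisors against the projective supersingular component $C$ and use the propagation mechanism of Proposition \ref{inclus} inside the loci $Z_{\Lambda}$, which are again Shimura subvarieties of orthogonal type to which Theorem \ref{borcherds} applies. But your endgame contains a genuine gap. You stop the induction once the relevant supersingular locus is no longer positive-dimensional and propose to finish by invoking the Hodge conjecture for the low-dimensional residual Kuga--Satake variety. This cannot work as stated: the Hodge conjecture converts Hodge classes into algebraic cycles, it does not create new Hodge classes, so it cannot raise the Picard rank of the weight $2$ Hodge structure at a fixed complex point $z$. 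To reach rank $b-3$ you must keep moving $z$ deeper into the Noether--Lefschetz stratification while continuing to specialize into $C$, and that is a purely geometric, dimension-theoretic problem. (The appeal to the Hodge conjecture for products of abelian surfaces does occur in the paper, but only afterwards, in Lemma \ref{Morrison}, to pass from Picard rank $b-3$ in characteristic zero to rank $b$ in characteristic $p$; it plays no role in Proposition \ref{induction}.)

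The paper closes the gap you leave open by a two-phase dimension count that your proposal lacks. Phase one (Proposition \ref{lift10}): for ranks $n<E((b-1)/2)$ one shows that $\overline{Z}_{\Lambda'}$ actually \emph{contains} all of $C$, not merely meets it; the input making this work is the generalization of Ogus' theorem that the generic fiber of the versal deformation of $(X_t,\Lambda)$ is ordinary, which bounds the non-ordinary locus and forces the containment. This phase reaches rank $E((b-1)/2)$ with $C\subseteq \overline{Z}_{\Lambda}$ and, crucially, costs nothing in the dimension of the intersection with $C$. Phase two: containment can no longer be forced, but since $\dim C = b-3-E((b-1)/2)$, each further ample-divisor intersection cuts $\dim\bigl(C\cap\overline{Z}_{\Lambda}\bigr)$ by at most one while raising the rank by one; running this exactly $b-3-E((b-1)/2)$ times ends with a non-empty (possibly zero-dimensional) intersection and a lattice of rank $b-3$, the maximum possible. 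So the Noether--Lefschetz machinery alone does reach $b-3$; your worry that it stalls early, and your proposed substitute for the final steps, are both misplaced. You correctly flag, but do not resolve, the issue of staying inside the fixed component $C$; the paper's explicit bookkeeping of $\dim\bigl(C\cap\overline{Z}_{\Lambda}\bigr)$ is precisely what resolves it.
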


We start with a generalization of Proposition \ref{inclus} to higher Picard numbers which might be of independent interest. Before stating it, let us introduce some notations.

Let $\Lambda$ be a nondegenerate lattice containing a primitive element $h$ of square $d$. We denote by $Z_{\Lambda}$ the locus in $\mathcal S_{n, \C}$ of points $s$ such that if $H_s$ is the weight $2$ Hodge structure on $L$ corresponding to $s$, there exists an embedding of $\Lambda$ in the N\'eron-Severi group of $H_s$ mapping $h$ to the class of the polarization. In case the rank of $\Lambda$ is $2$, we recover the divisor $D_{\Lambda}$ we used above. As before, $Z_{\Lambda}$ is defined over $\Q$.

\begin{prop}\label{lift10}
Let $x$ be the point of $\overline T$ corresponding to $X$ and let $C$ be a component of the supersingular locus in $\overline T$ passing through $x$. Then there exists a lattice $\Lambda$ of rank $E((b-1)/2)$ such that the Zariski-closure of $Z_{\Lambda}$ in $\overline T$ contains $C$.
\end{prop}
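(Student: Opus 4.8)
The plan is to prove the statement by induction on the rank $r$ of a lattice $\Lambda$ containing the polarization $h$ (of square $d$) for which the Zariski closure $\overline{Z}_\Lambda$ of $Z_\Lambda$ in $\overline T$ contains $C$, and to show that one can always push $r$ up to $E((b-1)/2)$. The base case $r=1$ is trivial: for $\Lambda=\langle h\rangle$ one has $Z_\Lambda=\mathcal S_{n,\C}$, so $\overline{Z}_\Lambda=\overline T\supseteq C$. The first step, from $r=1$ to $r=2$, is exactly the argument of Proposition \ref{one}: Borcherds' Theorem \ref{borcherds} produces a divisor $\overline D$ whose special fiber is ample by Proposition \ref{closure}, hence meets the projective variety $C$ of Proposition \ref{proper}, and Proposition \ref{inclus} then yields a rank-$2$ lattice with closed locus containing $C$. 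The induction will consist in iterating a relative version of this mechanism.

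For the inductive step, suppose $\overline{Z}_\Lambda\supseteq C$ for a lattice $\Lambda$ of rank $r$ with $1\le r<E((b-1)/2)$. The key point is that $Z_\Lambda$ is a component of (the image of) the sub-Shimura variety associated to the sub-datum $(SO(\Lambda^\perp),\Omega_{\Lambda^\perp})$: since $\Lambda$ has signature $(1,r-1)$ inside the Néron-Severi lattice, the orthogonal complement $\Lambda^\perp$ in $L_0$ has signature $(2,b-2-r)$, so the residual period domain is again of orthogonal type $(2,\ge 1)$ and carries all the structure used above --- a Kuga-Satake morphism to a space of abelian varieties and Borcherds' automorphic forms. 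Restricting the partial compactification of Proposition \ref{finite} and the Kuga-Satake map $\overline\kappa$ to $\overline{Z}_\Lambda$, and invoking Oort's theorem as in Proposition \ref{proper}, keeps $C$ a projective subvariety. Applying Theorem \ref{borcherds} to the residual Shimura variety $\mathcal S^{\Lambda}_{n,\C}$ produces a divisor supported on finitely many rank-$(r+1)$ Noether-Lefschetz loci $D_{\Lambda'}$ with $\Lambda'\supsetneq\Lambda$ and $\mathcal O(D)\cong\lambda^{\otimes a}$; the argument of Proposition \ref{closure}, applied to $\overline\kappa$ restricted to $\overline{Z}_\Lambda$ and to the ampleness of the Hodge bundle on $\mathcal A_{g,d',n,k}$, shows the special fiber of $\overline D$ is ample. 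Because $r<E((b-1)/2)$ we have $\dim\overline{Z}_\Lambda=b-2-r>b-3-E((b-1)/2)=\dim C$, so $C$ is a positive-codimension projective subvariety of $\overline{Z}_\Lambda$ and must meet $\overline D_k$. A component $D_{\Lambda'}$ then meets $C$, and I would run the argument of Proposition \ref{inclus} inside the residual family: track the new Kuga-Satake endomorphism furnished by Equation (\ref{sub}), specialize it to $C$, deform the enlarged system of endomorphisms over $C$ using supersingularity of the restricted Kuga-Satake family (Proposition \ref{equal}), control the deformation by the one-dimensional group $H^2(X,\mathcal O_X)$ as in Lemma \ref{divisor}, and conclude --- after rescaling $\Lambda'$ to some $\Lambda'_s$ --- that $\overline{Z}_{\Lambda'_s}\supseteq C$ with $\Lambda'_s$ of rank $r+1$. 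Iterating until $r=E((b-1)/2)$ gives the result.

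The signature and dimension bookkeeping guarantees the induction does not stall before rank $E((b-1)/2)$: at every stage $1\le r\le E((b-1)/2)$ the residual signature $(2,b-2-r)$ remains of the form $(2,\ge 1)$ for $b>3$, so Borcherds' construction applies, and $\dim\overline{Z}_\Lambda>\dim C$ keeps the ample Noether-Lefschetz divisor meeting $C$. I expect the main obstacle to be precisely this inductive bookkeeping: one must verify uniformly that restricting the entire package of Sections \ref{preliminary} and \ref{proof1} --- the Kuga-Satake morphism and its partial compactification, quasi-finiteness and finiteness, ampleness of the Hodge bundle on the special fiber, and the deformation-of-pairs analysis of Proposition \ref{inclus} --- to $\overline{Z}_\Lambda$ and to the sub-datum $\Lambda^\perp$ produces a \emph{genuinely new} algebraic class, independent of $\Lambda$, at each step. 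The two delicate points are checking that the restricted Kuga-Satake family stays supersingular over $C$, so that the new endomorphism lifts after multiplication by a power of $p$ exactly as in Proposition \ref{inclus}, and that deforming the full system of endomorphisms remains governed by the one-dimensional space $H^2(X,\mathcal O_X)$, so that precisely one new divisorial condition is imposed at each stage.
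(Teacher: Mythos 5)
Your overall strategy is the same as the paper's: induct on the rank of $\Lambda$, observe that $Z_{\Lambda}$ is itself an orthogonal Shimura subvariety of signature $(2,b-2-r)$ so that Borcherds' construction and the ampleness argument of Proposition \ref{closure} apply relative to $\overline{Z}_{\Lambda}$, force an intersection of some $\overline{Z}_{\Lambda'}$ with the projective positive-dimensional variety $C$, and then rerun Proposition \ref{inclus} to upgrade non-empty intersection to containment. That matches the paper step for step.

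However, the one point you flag as ``delicate'' and leave unresolved is exactly the one place where the paper has to prove something new, and it cannot be waved through. When you rerun Proposition \ref{inclus} for the enlarged lattice $\Lambda'$, the crucial step is showing that no component of the versal deformation locus $\Sigma$ of the pair (variety, system of $r+1$ endomorphism classes) lies entirely in the special fiber $T_k$ --- otherwise $\Sigma$ is not flat over $W$, its generic fiber does not exist where you need it, and you cannot conclude that the characteristic-zero locus $\overline{Z}_{\Lambda'}$ contains $C$. In the rank-$2$ case this is Ogus' Theorem 2.9; for higher rank the paper proves the needed generalization by a dimension count: $\dim \overline{Z}_{\Lambda'} \geq b-2-(n+1)$ while Ogus bounds the non-ordinary locus in the deformation space of $(X_t,\Lambda')$ by $\max(n+1,\, b-4-n)$, and the inequality $b-3-n > \max(n+1, b-4-n)$ is equivalent to $n+1 \leq E((b-1)/2)$. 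This is the true reason the induction terminates at rank $E((b-1)/2)$; your explanation via the residual signature staying of type $(2,\geq 1)$ is not the binding constraint (Borcherds would keep applying well past that rank), and ``the deformation is governed by the one-dimensional space $H^2(X,\mathcal O_X)$'' only gives that $\Sigma$ is cut out by one more equation, not that it is flat over $W$. A second, smaller slip: $C$ meets the ample divisor $\overline D_k$ because $C$ is projective of \emph{positive dimension}, not because it has positive codimension in $\overline{Z}_{\Lambda}$.
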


\begin{cor}
The variety $X$ admits a lift to a polarized variety of Picard rank at least $E((b-1)/2)$ in characteristic $0$.
\end{cor}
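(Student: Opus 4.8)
The plan is to prove Proposition \ref{lift10} by induction on the rank $\rho$ of a lattice $\Lambda$ containing $h$ with $h^2=d$, for which the Zariski closure $\overline Z_\Lambda$ of $Z_\Lambda$ in $\overline T$ contains the supersingular component $C$. The base case $\rho=1$ is trivial: with $\Lambda=\langle h\rangle$ one has $Z_\Lambda=\mathcal S_{n,\C}$, which contains the image of $C$. Each inductive step will raise $\rho$ by one, from $\rho$ to $\rho+1$, as long as $\rho<E((b-1)/2)$; after $E((b-1)/2)-1$ steps we reach the desired rank. Throughout I keep track of dimensions: generically $Z_\Lambda$ is a period space for the orthogonal group of $\Lambda^\perp\cap L$, of signature $(2,b-2-\rho)$, so $\dim\overline Z_\Lambda=b-2-\rho$, while $C$ has dimension $s=b-3-E((b-1)/2)$ by \cite[Theorem 15]{Og11} and Proposition \ref{proper}. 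Hence $C$ is properly, and at the final step with codimension one, contained in $\overline Z_\Lambda$ precisely when $\rho\le E((b-1)/2)$, which is what lets the induction terminate at rank $E((b-1)/2)$. I also use that $s>0$, so that $C$ is a positive-dimensional projective subvariety of $\overline T$.

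The heart of the inductive step is a relative version of Proposition \ref{inclus}: if $\overline Z_\Lambda$ contains $C$ and $\overline Z_{\Lambda'}$ is the closure of a rank-$(\rho+1)$ Noether--Lefschetz locus extending $\Lambda$ that meets $C$, then after replacing the newly added vector $v$ by $p^r v$ for a suitable $r$, one has $C\subseteq\overline Z_{\Lambda'_r}$. I would prove this exactly as Proposition \ref{inclus}, now working inside the formal neighborhood of a point of $C$ in $\overline Z_\Lambda$ rather than in all of $\overline T$. The extra algebraic class on the generic fiber corresponds to an endomorphism of the Kuga--Satake abelian variety which specializes to an endomorphism $\phi_0$ of $\mathcal A_0$; since the generic Kuga--Satake variety over $C$ is supersingular by Proposition \ref{equal}, the cokernel of the specialization map on endomorphisms is killed by a power of $p$, so $p^r\phi_0$ deforms over $C$ to an endomorphism $\psi_1$ of the Kuga--Satake variety of $X$. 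Lemma \ref{divisor} then shows that the crystalline class of $\psi_1$ lies in $H^2_{crys}(X/W)_{prim}$ via the embedding (\ref{subcrys1}). Because $H^2(X,\mathcal O_X)$ is one-dimensional, deforming $\psi_1$ while keeping the $\rho$ classes of $\Lambda$ algebraic is a single divisorial condition, cutting out a divisor $\Sigma$ inside $\overline Z_\Lambda$; the Ogus-type argument via the existence of an ordinary point, as in \cite[Theorem 2.9]{Og79}, shows that no component of $\Sigma$ dominates the special fiber, so $\Sigma$ is flat, its generic fiber lies in $Z_{\Lambda'_r}$, and its closure contains $C$.

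To produce the new rank-$(\rho+1)$ locus meeting $C$, I restrict the machinery of Section \ref{NL} to $\overline Z_\Lambda$. Generically $Z_\Lambda$ is an orthogonal Shimura variety of type $SO(2,b-2-\rho)$ to which Borcherds' construction and Theorem \ref{borcherds} apply, producing an ample class $\lambda^{\otimes a}$ supported on a finite union of Noether--Lefschetz divisors $Z_{\Lambda'_i}$ with $\Lambda'_i$ of rank $\rho+1$, which may be chosen pairwise non-isomorphic. Since the composite Kuga--Satake morphism still sends $\overline Z_\Lambda$ finitely to $\mathcal A_{g,d',n}$ and $\lambda$ is a pullback of the ample Hodge bundle there, the argument of Proposition \ref{closure} shows that the special fiber of the closure of this ample class in $\overline Z_\Lambda$ is again an ample $\Q$-Cartier divisor. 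It therefore meets the positive-dimensional projective variety $C$, so some $\overline Z_{\Lambda'_i}$ meets $C$, and the relative inclusion statement above raises the rank. Iterating until $\rho=E((b-1)/2)$ proves the proposition.

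For the corollary, let $z$ be a very general complex point of the rank-$E((b-1)/2)$ locus $\overline Z_\Lambda$ specializing into $C$. Each of the $E((b-1)/2)$ classes of $\Lambda$ deforms to $z$ as a Hodge class in $H^2(X_z,\Z)_{prim}$: the Kuga--Satake correspondence is induced by a Hodge class, so the endomorphisms of the Kuga--Satake variety that we constructed have cohomology classes lying in $H^2(X_z,\Z)_{prim}$, which by the Lefschetz $(1,1)$-theorem are classes of line bundles. As $\Lambda$ is nondegenerate of rank $E((b-1)/2)$, these classes are independent in the N\'eron--Severi group, giving Picard rank at least $E((b-1)/2)$. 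I expect the main difficulty to be the relative version of Proposition \ref{inclus}: one must check that the deformation-obstruction calculation and the Ogus flatness argument genuinely survive when carried out inside the possibly singular locus $\overline Z_\Lambda$ rather than the smooth space $T$, and that the reduction input, namely the analog of Proposition \ref{closure}, applies to the smaller orthogonal Shimura varieties uniformly in $\rho$.
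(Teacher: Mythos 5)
Your proposal is correct and follows essentially the same route as the paper's proof of Proposition \ref{lift10}: an induction on the rank of $\Lambda$, using the Borcherds ampleness statement restricted to the sub-Shimura-variety $Z_{\Lambda}$ to force a nonempty intersection of $\overline{Z}_{\Lambda'}$ with the projective, positive-dimensional component $C$, then rerunning Proposition \ref{inclus} relatively to upgrade intersection to containment, with the Ogus dimension count for the non-ordinary locus guaranteeing flatness as long as $n<E((b-1)/2)$. The only cosmetic difference is your base case $\rho=1$ versus the paper's implicit start, and for the corollary you should take the lifting point to specialize to the point $x$ corresponding to $X$ itself (which is possible since $x\in C\subseteq\overline{Z}_{\Lambda}$), not merely into $C$.
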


\begin{proof}[Proof of Proposition \ref{lift10}]
We prove by induction on $n\leq E((b-1)/2)$ that there exists a lattice $\Lambda$ of rank $n$ such that the Zariski closure of $Z_{\Lambda}$ in $\overline T$ contains $S$. We will argue as in Proposition \ref{inclus}, which deals with the rank $2$ case.

\bigskip

Let $n< E((b-1)/2)$ be a positive integer, and assume that there exists a lattice $\Lambda$ of rank $n$ such that the Zariski-closure of $Z_{\Lambda}$ in $\overline T$ contains $C$. Let us first remark that $Z_{\Lambda}$ is itself a Shimura subvariety of $\mathcal S_{n}$, associated to the orthogonal of a copy of $\Lambda$ in the lattice $L$. As such, it is a Shimura variety of orthogonal type corresponding to a lattice of signature $(2, b-2-n)$. 

The Noether-Lefschetz locus on $Z_{\Lambda}$ is a countable union of divisors satisfying the following analog of Theorem \ref{borcherds}. Let $\Lambda'$ be a nondegenerate rank $n+1$ lattice containing $\Lambda$. The variety $Z_{\Lambda'}$ is naturally a divisor in $Z_{\Lambda}$. The proof of \cite[Theorem 3.1]{Mau12} translates immediately to show that the analog of Theorem \ref{borcherds} holds for the divisors $Z_{\Lambda'}$ in $Z_{\Lambda}$.

We can now use the ampleness arguments of Proposition \ref{closure}, working this time with the Zariski closure $\overline{Z}_{\Lambda}$ of $Z_{\Lambda}$ in $\overline T$, to show that there exists a lattice $\Lambda'$ of rank $n+1$ containing $\Lambda$ such that the intersection of $\overline{Z}_{\Lambda'}$ and $C$ is not empty\footnote{The only difference with Proposition \ref{closure} is that $\overline{Z}_{\Lambda}$ is not normal a priori. However, one can work on the normalization of $\overline{Z}_{\Lambda}$ and carry on with the proof.}.

\bigskip

At this point, we can repeat the proof of Proposition \ref{inclus} to show that some $\overline{Z}_{\Lambda'}$ actually contains the support of $C$. The only result that does not go through is the following. Let $\mathcal X_t$ be a fiber of $\pi$ over a $k$-point $t$ of $T$ with an embedding of $\Lambda$ in $\mathrm{Pic(X_t)}$, and let $(\mathcal X_t, S)$ be an irreducible component of a versal $k$-deformation of the pair $(X_t, \Lambda)$ in $T$. We need to show that the geometric generic fiber $\mathcal X_{\overline \eta}$ is ordinary with Picard group of rank $n$. This is a generalization of \cite[Theorem 2.9]{Og79}.

First remark that by standard deformation theory, the dimension of $\overline{Z}_{\Lambda}$ is at least $b-2-n$. On the other hand, Ogus shows in \cite[Theorem 2.9]{Og79} that the dimension of the non-ordinary locus in $S$ is at most $\mathrm{max}(n, b-3-n)$. Since $n<E((b-1)/2)$, this shows that $\mathcal X_{\overline \eta}$ is ordinary. Usual deformation theory of ordinary $K3$ crystals allows us to conclude that $S$ is of dimension $b-2-n$ and that the conclusion holds.
\end{proof}

Using the result above, we can prove Proposition \ref{induction}. Let us show by induction on $n\leq b-3-E((b-1)/2)$ that there exist a nondegenerate lattice $\Lambda$ of rank $E((b-1)/2)+n$ such that the intersection of $\overline{Z}_{\Lambda}$ with $C$ is a non-empty subscheme $C_{\Lambda}$ of $C$ of dimension at least $b-3-E((b-1)/2)-n$. For $n=E((b-1)/2)$, this gives the conclusion of Proposition \ref{induction}. 

For $n=0$, this is the statement of Proposition \ref{lift10}, since $C$ is of dimension $b-3-E((b-1)/2)$. Assume that the result we just stated holds for some $n< b-3-E((b-1)/2)$. As in the proof of Proposition \ref{lift10} above, since the dimension of $C_{\Lambda}$ is positive, we can find a nondegenerate lattice $\Lambda'$ of rank $n+1$ containing $\Lambda$ such that $\overline{Z}_{\Lambda'}$ has non-empty intersection $C_{\Lambda'}$ with $C_{\Lambda}$. Since the dimension of $C_{\Lambda}$ is at least $b-3-E((b-1)/2)-n$, the dimension of $C_{\Lambda'}$ is at least $b-3-E((b-1)/2)-(n+1)$. This concludes the proof of Proposition \ref{induction}. \qed

\subsection{From Picard rank $b-3$ to Picard rank $b$}

In this section, we show how to derive Theorem \ref{weak} from Proposition \ref{induction}.

\begin{proof}[Proof of Theorem \ref{weak}]
We start with a Hodge-theoretic lemma.

\begin{lem}\label{Morrison}
Let $H$ be a weight $2$ polarized Hodge structure with $h^{2, 0}=1$. Assume that the codimension of the space of Hodge classes in $H$ is at most $3$. Let $A$ be the Kuga-Satake variety of $H$ together with a polarization, and let 
$$p : \mathrm{End}(H^1(A, \Q))\ra \mathrm{End}(H^1(A, \Q))$$
be the orthogonal projector onto $H$. Then $p$ is induced by an algebraic correspondence of $(A\times A)^2$.
\end{lem}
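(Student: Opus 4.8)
The plan is to reduce the algebraicity of the projector $p$ to the known Hodge conjecture for abelian varieties of small dimension, by first understanding the Hodge structure on $\mathrm{End}(H^1(A,\Q))$ explicitly via the Kuga-Satake construction. Recall that by the canonical embedding (\ref{sub}), $H$ sits inside $\mathrm{End}(H^1(A,\Q))$ as a sub-Hodge structure, and the orthogonal projector $p$ onto $H$ is itself a Hodge class in $\mathrm{End}(\mathrm{End}(H^1(A,\Q))) \simeq \mathrm{End}(H^1((A\times A)^2,\Q))$, since $p$ is a morphism of Hodge structures (being a projector onto a sub-Hodge structure). Thus $p$ is automatically a Hodge class on $(A\times A)^2$; the entire content of the lemma is that this particular Hodge class is \emph{algebraic}. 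So the strategy is to invoke known cases of the Hodge conjecture.

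First I would use the hypothesis that the Hodge classes in $H$ have codimension at most $3$. Since $h^{2,0}(H)=1$, the transcendental part $H_{tr}$ of $H$ (the orthogonal complement of the Hodge classes) is a weight $2$ Hodge structure with $h^{2,0}=1$ and total dimension at most $3$. The point of this numerical bound is that it forces the Kuga-Satake variety attached to $H_{tr}$, or the relevant factor $B$ of $A$ governing the transcendental cohomology, to have very small dimension. I would make this precise: the Clifford algebra of a quadratic space of dimension $\leq 3$ has bounded rank, so the corresponding abelian variety factor has dimension small enough that the Hodge conjecture is known for its self-products. The key references are the results on the Hodge conjecture for abelian varieties of low dimension (for instance abelian fourfolds, or more precisely products whose Hodge rings are generated by divisor classes and the exceptional Weil classes), which apply precisely in this range.

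Concretely, I would decompose $\mathrm{End}(H^1(A,\Q))$ into the span of Hodge classes (which are trivially algebraic, being generated by divisors and endomorphisms coming from the polarization and the Kuga-Satake action) plus the image of $H_{tr}$. The projector $p$ splits accordingly, and the only non-obvious piece is the projector onto $H_{tr}$. Because $\dim H_{tr}\leq 3$, this transcendental piece lives in the cohomology of a product of abelian varieties of dimension small enough that all Hodge classes are known to be algebraic; I would cite the relevant theorem (work of Tankeev, or the analysis of the Kuga-Satake correspondence in low dimension as in Andr\'e's \cite{An96}) to conclude that the projector onto $H_{tr}$ is algebraic as well. Assembling the algebraic pieces gives the algebraicity of $p$ on $(A\times A)^2$.

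The main obstacle I anticipate is controlling exactly which low-dimensional abelian varieties arise and verifying that the Hodge conjecture is genuinely available for them. The codimension-$3$ hypothesis is sharp: it is what guarantees the transcendental lattice has rank at most $3$, keeping us inside the range where the Hodge conjecture for abelian varieties (and in particular the absence of exotic Weil classes, or the algebraicity of whatever Weil classes do appear) is established. I would therefore spend the most care on the bookkeeping that translates ``codimension of Hodge classes $\leq 3$'' into a concrete bound on the dimension of the relevant abelian variety factor, and on matching that bound against the precise known cases of the Hodge conjecture, rather than on the formal decomposition of $p$, which is routine once the algebraicity of the transcendental projector is in hand.
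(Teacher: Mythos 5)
Your proposal follows essentially the same route as the paper: reduce to the transcendental part of $H$, of rank at most $3$, via the functoriality of the Kuga-Satake construction, observe that the resulting Kuga-Satake variety is isogenous to a power of an abelian surface, and invoke a known case of the Hodge conjecture for the resulting product to conclude that the projector is algebraic. The only point needing care is your final citation: the Hodge conjecture is \emph{not} known for general abelian fourfolds, but here $(A\times A)^2$ is isogenous to a power of an abelian surface, and the Hodge conjecture for such products is the main result of \cite{RM08}, which is exactly what the paper invokes.
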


\begin{proof}
Standard computations show that the Kuga-Satake variety of $H$ is isogenous -- in a functorial way -- to a power of the Kuga-Satake variety associated to the transcendental lattice of $H$. As a consequence, we can assume that the dimension of $H$ is $3$. 

In that case, $A$ is an abelian variety of dimension $2^{3-1}=4$. However, we know that $A$ is isogenous to the square of the Kuga-Satake variety obtained by considering the even Clifford algebra of $H$. It follows that $A$ is isogenous to the square of an abelian surface. In particular, $(A\times A)^2$ is isomorphic to a product of abelian surfaces and satisfies the Hodge conjecture by the main result of \cite{RM08}. This proves the theorem, as the projector $p$ is indeed given by a Hodge class.
\end{proof}

We now use the notations of Proposition \ref{induction}, and we want to prove that the Picard rank of $X$ is $b$. Let $A_z$ be the Kuga-Satake variety of the weight $2$ Hodge structure $H_z$ parametrized by $z$. By assumption, $A_z$ has good, supersingular reduction at $p$. Let $A_y$ be this smooth reduction. By the preceding lemma, we have an algebraic correspondence of codimension $2$ on $(A_z\times A_z)^2$ which acts as the orthogonal projector 
$$p : \mathrm{End}(H^1(A_z, \Q))\ra \mathrm{End}(H^1(A_z, \Q))$$
onto $H_z$.

The correspondence $p$ specializes to a correspondence $p_y$ on $(A_y\times A_y)^2$. By the smooth base-change theorem, the image of $p_y$ acting on crystalline cohomology is $(b-1)$-dimensional. Furthermore, since $A_y$ is supersingular, its crystalline cohomology is spanned by algebraic cycles. In particular, we can find endomorphisms $\psi_1, \ldots, \psi_{b-1}$ of $A_y$ that lie in the image of $p$ and span the image of $p$.

Arguing as in Proposition \ref{inclus} and Lemma \ref{divisor}, it is easy to see that after replacing the $\psi_i$ by some nonzero multiples, each of the $\psi_i$ deform over $C$, and that they lift to characteristic zero. A Hodge-theoretic argument as in the end of Proposition \ref{inclus} then allows us to conclude that the $\psi_i$ deform to classes of line bundles on $X$ spanning the primitive cohomology of $X$. This concludes the proof of Theorem \ref{weak}.
\end{proof}

\bibliography{bibK3}{}
\bibliographystyle{plain}

\end{document}